\newtheorem{lemma}{Lemma}
\newtheorem{theorem}{Theorem}
\newtheorem{remark}{Remark}
\begin{document}

\title{Causality-Informed Data-Driven Predictive Control}

\author{Malika Sader, Yibo Wang, Dexian Huang, Chao Shang, \IEEEmembership{Member, IEEE}, and Biao Huang, \IEEEmembership{Fellow, IEEE}
\thanks{This work was supported by National Natural Science Foundation of China under Grants 62373211 and 62327807. \textit{(Corresponding author: Chao Shang.)}}
\thanks{M. Sader and Y. Wang are with Department of Automation, Tsinghua University, Beijing 100084, China (e-mail: mlksdr@mail.tsinghua.edu.cn, wyb21@mail.tsinghua.edu.cn). }

\thanks{D. Huang and C. Shang are with Department of Automation, Beijing National Research Center for Information Science and Technology, Tsinghua University, Beijing 100084, China (e-mail: huangdx@tsinghua.edu.cn, c-shang@tsinghua.edu.cn). }

\thanks{B. Huang is with Department of Chemical and Materials Engineering, University of Alberta, Edmonton, AB T6G 1H9, Canada (e-mail: biao.huang@ualberta.ca). }}

\markboth{IEEE Transactions on Control Systems Technology}%
{Sader \MakeLowercase{\textit{et al.}}: Causality-Informed Data-Driven Predictive Control}

\maketitle

\begin{abstract}
 As a useful and efficient alternative to generic model-based control scheme, data-driven predictive control (DDPC) is subject to bias-variance trade-off and is known to not perform desirably in face of uncertainty. Through the connection between direct data-driven control and subspace predictive control, we gain insight into the reason being the lack of causality as a main cause for their high variance of implicit prediction. In this \textcolor{black}{brief}, we derive \textcolor{black}{a new causality-informed formulation of DDPC} as well as its regularized form that balances between control cost minimization and implicit identification of a causal multi-step predictor. Since the proposed causality-informed formulations only call for block-triangularization of a submatrix in the generic non-causal DDPC based on LQ factorization, our causality-informed formulation of DDPC enjoys computational efficiency. Its efficacy is investigated through numerical examples and application to model-free control of a simulated industrial heating furnace. Empirical results corroborate that the proposed method yields obvious performance improvement over existing formulations in handling stochastic noise and process nonlinearity.

\end{abstract}

\begin{IEEEkeywords}
Data-driven predictive control, subspace predictive control, causality, LQ factorization, regularization.
\end{IEEEkeywords}

\section{Introduction}
\label{sec:introduction}
\IEEEPARstart{M}{odel} predictive control (MPC) has been broadly employed in industrial practice due to its efficacy to tackle constraints and multiple performance criteria. To achieve desirable performance, a sufficiently accurate process model is indispensable in MPC, which requires either first-principle modeling or identification from data \cite{hjalmarsson1996model,pan2022stochastic}. The recent years have witnessed a paradigm shift from model-based control towards the so-called data-driven predictive control (DDPC) \cite{breschi2023data,Markovsky2021,verheijen2023handbook}, whose idea can be traced back to the early subspace predictive control (SPC) \cite{favoreel1999spc}. Therein, a non-parametric predictor is constructed from an input/output trajectory while bypassing identification exercise that is possibly cumbersome. Due to such intriguing features in a data-rich world, DDPC has found widespread applications in power systems, motion control, smart buildings, fuel cell systems, to name a few. See \cite{Markovsky2021} for a comprehensive review. 

Building upon behavioral systems theory \cite{willems2007behavioral}, data-driven control design has pros and cons of its own. Like two sides of a coin, the direct DDPC scheme excels in face of ``bias" error induced by inadequate model fitting, e.g., nonlinear systems \cite{martin2023guarantees}, but may not perform well in the case of ``variance" uncertainty \cite{krishnan2021direct,fiedler2021relationship}. In the latter case, the indirect method is typically superior to DDPC owing to the denoising effect of explicit model identification. Such a bias-variance trade-off has been theoretically formalized and investigated in \cite{dorfler2022bridging,krishnan2021direct}. In \cite{dorfler2022bridging}, an intermediate between two extremal formulations was achieved by endowing DDPC with a novel regularizer. 

For performance improvement, extensive efforts have been made in integrating useful prior knowledge into data-driven control design \cite{berberich2022combining}. As is known to all, the multi-step predictor embedded in predictive control shall be causal \textit{a priori}, which is naturally ensured by parametric models such as a state-space representation. The causality of predictive models has been intensively studied in subspace identification literature \cite{Qin2005,m,peternell1996statistical}, while the lack of causality was identified as a main cause for inflated estimation errors. In data-driven control design, it is also important to ensure strict causality of data-driven non-parametric representations of linear time-invariant (LTI) systems. In a recent work \cite{o2022data}, a modified formulation of DDPC was put forth by segmenting trajectories within the control horizon. This can partially alleviate (but not thoroughly remove) the non-causal effect in predictions, which, however, comes at a higher computational cost due to additional decision variables involved in the modification.

These relevant endeavors indicate that the lack of causality in DDPC is a critical factor to its underperformance in face of uncertainty. In this \textcolor{black}{brief}, we investigate \textit{causality-informed} realizations of DDPC, in order to boost its practical performance and narrow the gap with model-based schemes under stochastic uncertainty. \textcolor{black}{The primary tool we utilize is the LQ factorization, which plays a key role in $\gamma$-DDPC \cite{breschi2023data}, a useful realization of DDPC with reduced complexity and clear interpretability.} \textcolor{black}{We first revisit the relation between $\gamma$-DDPC and SPC, discuss the equivalence between their regularizations, and reveal their lack of causality}. Based on this, we develop a simple yet effective causal formulation of $\gamma$-DDPC, which only calls for block-triangularizing a submatrix in generic non-causal $\gamma$-DDPC. Moreover, a regularized form is further developed that extends the \textcolor{black}{known regularizations of non-causal $\gamma$-DDPC in \cite{breschi2023data,breschi2023uncertaintyaware,breschi2023impact}} to a causality-informed setup. We investigate the efficacy of the causality-informed DDPC using numerical examples and high-fidelity simulations of an industrial heating furnace system. Empirical results showcase that, the enforced causality in DDPC helps to suppress output prediction errors when there exists large variance uncertainty or model mismatch caused by system nonlinearity, thereby always leading to control performance improvement over non-causal DDPC schemes. 
It is worth noting that even in the presence of nonlinearity, enhanced prediction accuracy and control performance can be attained by enforcing causality.


\textbf{Notation:} We denote by $\mathbb{Z}$ ($\mathbb{Z}^+$) the set of (positive) integers. 
For a matrix $X$, $\| X \|_F$ denotes the Frobenius-norm and $X^\dagger$ denotes the Moore-Penrose inverse. $X(i:j)$ denote the submatrix constructed with the elements from the $i$-th to the $j$-th row of $X$. For an $L\times L$ block matrix $X \in \mathbb{R}^{pL \times mL}$ where each block has size $p\times m$, the operator $\mathcal{LT}_{p,m}(X)$ returns the lower-block triangular part of $X$. \textcolor{black}{We define $\mathbf{col}(x(i),x(i+1),\cdots,x(j))=[x(i)^\top~x(i+1)^\top~\cdots~x(j)^\top]^\top$ and $\mathbf{col}(X_1,X_2,\cdots,X_n) = [X_1^\top~X_2^\top~\cdots~X_n^\top]^\top$. Given a sequence $\{ x(i) \}_{i=1}^N\in\mathbb{R}^n$, $x_{[i:j]} = \mathbf{col}(x(i),x(i+1),\cdots,x(j))$ denotes the restriction of $x$ to the interval $[i,j]$.
} A block Hankel matrix of depth $s$ can be constructed from ${x}_{[i:j]}$ via the following defined block Hankel matrix operator $\mathcal{H}_s(x_{[i:j]})$. A sequence $x_{[i:j]}$ is said to be persistently exciting of order $s$ if the Hankel matrix $\mathcal{H}_s(x_{[i:j]})$ has full row rank. \textcolor{black}{Given an $n$th-order state-space model $(A,B,C,D)$, its lag is denoted as $\ell(A,B,C,D)$, i.e., the smallest integer $l$ such that $\mathbf{col}(C,CA,...,CA^{l-1})$ has rank $n$.}
 
\section{Preliminaries}\label{section 2}
\subsection{Basics of DDPC}
Consider the following stochastic discrete-time LTI system:
\begin{equation}
    \label{equation: LTI system}
    \left \{    \begin{aligned}
    x(t+1)&=Ax(t)+Bu(t)+w(t)\\
    y(t)&=Cx(t)+Du(t)+v(t)
    \end{aligned} \right .
\end{equation}
where ${x}(t)\in\mathbb{R}^n$, ${u}(t)\in\mathbb{R}^m$ and ${y}(t)\in\mathbb{R}^p$ stand for state, input and output, respectively. $w(t) \in\mathbb{R}^m$ and $v(t) \in\mathbb{R}^p$ denote process and measurement noise, respectively. It is assumed that the system \eqref{equation: LTI system} is minimal. \textcolor{black}{In a data-driven environment, system matrices $(A,B,C,D)$ are not known but an input-output trajectory $\{u_d(i), y_d(i)\}_{i=1}^{N_d}$ can be collected offline, where $\{u_d(i)\}_{i=1}^{N_d}$ is persistently exciting of order $L+n$ and $N_d \ge (m+1)L+n+1$ with $L=L_{\rm p}+L_{\rm f}$ ($L_p$ and $L_{\rm f}$ denote past horizon and future horizon, respectively). We first recall the following result on data-driven representation of deterministic LTI systems.}


\begin{theorem} \label{Fundamental Lemma}
(\cite{markovsky2008data}) For system \eqref{equation: LTI system} with $w(t)=0$ and $v(t)=0$, $U_d = \mathcal{H}_L({u}_{d,[1,N]})$ and $Y_d = \mathcal{H}_L({y}_{d,[1,N]})$ are block Hankel matrices of inputs and outputs. Consider the past input-output data $u_{\rm p}= u_{[t-L_{\rm p}:t-1]}$ and $y_{\rm p}= y_{[t-L_{\rm p}:t-1]}$ and the future input $u_{\rm f}= u_{[t:t+L_{\rm f}-1]}$. Under the condition $L_{\rm p}\ge\ell(A,B,C,D)$, the future output $y_{\rm f}=y_{[t:t+L_{\rm f}-1]}$ can be uniquely decided by $Y_{\rm f}g=y_{\rm f}$, where $g \in \mathbb{R}^{N_d - L + 1}$ is an additional variable solving the following equations:
        \begin{equation}
            \label{equation: DeePC division}
                \mathbf{col}(Z_{\rm p},U_{\rm f})g=
                \mathbf{col}(z_{\rm p},u_{\rm f})
        \end{equation}
        with $Z_p = [U_p^\top~~ Y_p^\top]^\top$ and $z_p = [u_p^\top~~ y_p^\top]^\top$, where $U_{\rm p} = U_{{\rm d},[1:mL_{\rm p}]}\in\mathbb{R}^{mL_{\rm p}\times (N_d-L+1)},~
                U_{\rm f} = U_{{\rm d},[mL_{\rm p}+1:mL]}\in\mathbb{R}^{mL_{\rm f}\times(N_d-L+1)}$, 
        and $Y_{\rm p},Y_{\rm f}$ are constructed in a similar way. 
\label{thm: fundamental lemma}
\end{theorem}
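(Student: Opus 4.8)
The plan is to split the claim into a \emph{feasibility} part, namely that the equations in \eqref{equation: DeePC division} admit a solution reproducing the true $y_{\rm f}$, and a \emph{uniqueness} part, namely that every solution of \eqref{equation: DeePC division} returns the same $Y_{\rm f}g$. I would lean on the behavioral fundamental lemma for the former and on the observability encoded in the lag condition for the latter.

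First I would invoke the fundamental lemma itself: since \eqref{equation: LTI system} with $w(t)=v(t)=0$ is minimal, hence controllable, and $\{u_d(i)\}$ is persistently exciting of order $L+n$, the column span of $\mathbf{col}(U_d,Y_d)$ coincides with the restricted behavior $\mathcal{B}_L$ of all length-$L$ input--output trajectories. The true concatenated trajectory $\mathbf{col}(u_{\rm p},y_{\rm p},u_{\rm f},y_{\rm f})$ lies in $\mathcal{B}_L$ by construction, so there exists $g$ with $\mathbf{col}(U_d,Y_d)g=\mathbf{col}(u_{\rm p},y_{\rm p},u_{\rm f},y_{\rm f})$. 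This $g$ satisfies \eqref{equation: DeePC division} and yields $Y_{\rm f}g=y_{\rm f}$, which settles feasibility.

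The core of the argument is uniqueness, which is where I expect the main obstacle. Taking two solutions $g_1,g_2$ of \eqref{equation: DeePC division} and setting $\delta=g_1-g_2$, one has $\mathbf{col}(Z_{\rm p},U_{\rm f})\delta=0$. Because each column of $\mathbf{col}(U_d,Y_d)$ is a genuine length-$L$ trajectory and the behavior is a linear subspace, the vector $\mathbf{col}(U_d,Y_d)\delta$ is itself a trajectory of \eqref{equation: LTI system}, and its past input, past output, and future input all vanish. I would then convert the lag condition into a state statement: since $\ell(A,B,C,D)$ is the observability index, the matrix $\mathbf{col}(C,CA,\ldots,CA^{L_{\rm p}-1})$ has full column rank $n$ whenever $L_{\rm p}\ge\ell(A,B,C,D)$, so vanishing past output under vanishing past input forces the state at the start of the window, and hence the state $x(t)$ at the prediction instant, to be zero.

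Finally, propagating this zero state forward under the zero future input of $\delta$ gives zero future output, i.e.\ $Y_{\rm f}\delta=0$, so $Y_{\rm f}g_1=Y_{\rm f}g_2$; together with feasibility this shows $Y_{\rm f}g$ is constant over the solution set of \eqref{equation: DeePC division} and equals $y_{\rm f}$. The delicate bookkeeping lies entirely in the state-reconstruction step: one must ensure the lag bounds the \emph{observability} index (not merely a controllability index) and that $L_{\rm p}$ is counted so that the reconstructed state is precisely the one initializing the future horizon.
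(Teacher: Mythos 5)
Your proof is correct and follows exactly the standard argument behind this result, which the paper itself does not reprove but simply imports from \cite{markovsky2008data}: existence of $g$ via Willems' fundamental lemma (controllability from minimality plus persistent excitation of order $L+n$ giving the image representation of the length-$L$ restricted behavior), and uniqueness of $Y_{\rm f}g$ by showing any kernel vector of $\mathbf{col}(Z_{\rm p},U_{\rm f})$ generates a trajectory whose initial state is forced to zero because $L_{\rm p}\ge\ell(A,B,C,D)$ makes the depth-$L_{\rm p}$ observability matrix full column rank. Your closing caveat is also consistent with the paper's conventions, since it defines the lag precisely as the observability index (smallest $l$ with $\mathbf{col}(C,CA,\ldots,CA^{l-1})$ of rank $n$), so the state-reconstruction step goes through as you describe.
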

Owing to Theorem \ref{Fundamental Lemma}, the optimization problem of DDPC can be formulated as \cite{Coulson2019}:
\begin{subequations}
        \begin{align}
        \min_{u_f,\hat y_f,g}\quad&\mathcal{J}(u_f,\hat y_f) \\
        \mathrm{s.t.}\quad~&
        \label{eq: aDeePC0}
        {Y}_fg=\hat y_f,~\mbox{Eq.}~\eqref{equation: DeePC division}\\
        & u_f\in\mathbb{U},\ \hat y_f\in\mathbb{Y},
        \label{eq: bDeePC0}
        \end{align}
        \label{eq: DeePC0}
\end{subequations}
where $\mathbb{U}$ and $\mathbb{Y}$ are input and output constraint sets. The objective of \eqref{eq: DeePC0} can be specified as $\mathcal{J}(u_f,\hat{y}_f) = ||\hat{y}_f-r||_{\mathcal{Q}}^2+||{u}_f||_{\mathcal{R}}^2$,
where $r$ is a reference signal, and $\mathcal{R},\mathcal{Q} \succ 0$ are input and output cost matrices. 

In the presence of uncertainty, i.e. $w(t) \neq 0$ and $v(t) \neq 0$, the solution of $g$ in \eqref{eq: DeePC0} may become ill-conditioned. A useful remedy is to adopt the Moore-Penrose inverse, i.e.,
\begin{equation}
    g^*_{\rm pinv} = \begin{bmatrix}Z_p\\ {U}_f\end{bmatrix}^\dagger \begin{bmatrix}z_p \\  u_f \end{bmatrix},
    \label{eq: gpinv}
\end{equation}
which is the minimum-norm solution to \eqref{equation: DeePC division}. This supplies a direct data-driven predictor of $\hat{y}_f$ using
$z_p$ and $u_f$, based on which the problem of SPC is formulated as \cite{favoreel1999spc}:
 \begin{subequations}
        \label{eq: DeePC0p}
        \begin{align}
        \min_{u_f,\hat y_f}\quad&\mathcal{J}(u_f,\hat y_f) \\
        \mathrm{s.t.}\quad~&
        \label{eq: aDeePC0p}
        \hat{y}_f = Y_f   g^*_{\rm pinv},~\mbox{Eq.}~\eqref{eq: gpinv}\\
        & u_f\in\mathbb{U},\ \hat y_f\in\mathbb{Y}.
        \label{eq: bDeePC0p}
        \end{align}
    \end{subequations}
 


\subsection{$\gamma$-DDPC via LQ factorization}
In \cite{breschi2023data}, \textcolor{black}{$\gamma$-DDPC} was derived by performing LQ factorization of the joint input-output block Hankel matrix:
\begin{equation}
\begin{bmatrix}
    Z_p \\ U_f \\ Y_f
\end{bmatrix} = \begin{bmatrix}
L_{11} & 0 & 0 \\
L_{21} & L_{22} & 0 \\
L_{31} & L_{32} & L_{33} \\
\end{bmatrix}\begin{bmatrix}
Q_1 \\
Q_2 \\
Q_3
\end{bmatrix},
\label{eq: LQ factorization}
\end{equation}
where matrices $\{ L_{ii} \}_{i=1}^3$ are non-singular and $\{ Q_i \}_{i=1}^3$ are orthonormal matrices satisfying $Q_iQ_i^\top = I$ and $Q_iQ_j^\top = 0,~ \forall i \neq j$. Combining \eqref{eq: aDeePC0} with \eqref{eq: LQ factorization}, we arrive at:
\begin{equation}
    \begin{bmatrix} z_p \\ u_f\\\hat y_f \end{bmatrix}\\=\begin{bmatrix}
    Z_p \\ U_f \\ Y_f
\end{bmatrix} g= \begin{bmatrix}
L_{11} & 0 & 0 \\
L_{21} & L_{22} & 0 \\
L_{31} & L_{32} & L_{33} \\
\end{bmatrix}\begin{bmatrix}
Q_1 \\
Q_2 \\
Q_3
\end{bmatrix}g.
\label{eq: LQ factorization-relationship}
\end{equation}
By defining a new decision variable $\gamma=\begin{bmatrix} 
\gamma^{\top}_1&\gamma^{\top}_2&\gamma^{\top}_3
\end{bmatrix}^{\top} = [Q_1 g; Q_2 g; Q_3 g]$, the data-driven control design problem \eqref{eq: DeePC0} and its regularized version can be recast as follows, known as $\gamma$-DDPC
\cite{breschi2023uncertaintyaware,breschi2023impact}:
\begin{subequations}
    \label{eq: r-gamma DeePC}   
    \begin{align} \label{eq: 0r-gamma DeePC} 
        \min_{u_f,\hat y_f,\gamma} &~ \mathcal{J}(u_f,\hat  y_f) +\mu  \cdot \Phi(\gamma)  \\
        \label{eq: r-gamma DeePC1} 
        {\rm s.t.}~~ &~ \begin{bmatrix}
          L_{11} & 0 &0\\
            L_{21} & L_{22} &0\\
            L_{31} & L_{32}&L_{33}
        \end{bmatrix} \begin{bmatrix} 
       \gamma_1 \\ \gamma_2\\ \gamma_3
        \end{bmatrix} = \begin{bmatrix} 
        z_p\\u_f \\ \hat y_f
        \end{bmatrix} ,\\
        \label{eq: r-gamma DeePC2} 
        &~ u_f \in \mathbb{U},~ \hat y_f \in \mathbb{Y},
      \end{align}
 \end{subequations}
    where $\Phi(\gamma)$ and $\mu > 0$ stand for the regularizer and the weight, respectively. \textcolor{black}{By using a finite $\mu$, the input-output predictor underlying \eqref{eq: r-gamma DeePC}, which is not explicitly identified, does not adhere to the SPC predictor \eqref{eq: aDeePC0p} anymore but can be time-varying when \eqref{eq: r-gamma DeePC} is resolved iteratively in a receding horizon fashion.} This enables to flexibly balance between minimizing the control cost and identifying a predictor from data, which may yield improved performance \cite{krishnan2021direct}. Besides, by enforcing $\gamma_3 = 0$, $\gamma$-DDPC becomes equivalent to the generic SPC \cite{breschi2023data}. 

    \begin{theorem} [Equivalence to SPC \cite{breschi2023data}] \label{thm: standard SPC}
Solving the $\gamma$-DDPC problem \eqref{eq: r-gamma DeePC} with the norm-based regularizer $\Phi(\gamma) = \| \gamma_3 \|^2$ and $\mu \to \infty$ yields the same control decision $u_f^*$ and output prediction $\hat{y}_f^*$ as those by solving \eqref{eq: DeePC0p}. 
 \end{theorem}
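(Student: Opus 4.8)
The plan is to eliminate the auxiliary variable $\gamma$ from the constraint \eqref{eq: r-gamma DeePC1}, thereby recasting \eqref{eq: r-gamma DeePC} as a quadratic-penalty relaxation of the SPC problem \eqref{eq: DeePC0p}, and then to pass to the limit $\mu\to\infty$. First I would exploit the lower-block-triangular structure of the LQ factor together with the nonsingularity of $L_{11},L_{22},L_{33}$. Reading \eqref{eq: r-gamma DeePC1} row by row gives $\gamma_1=L_{11}^{-1}z_p$ (fixed by the past data), $\gamma_2=L_{22}^{-1}(u_f-L_{21}\gamma_1)$ (fixed once $u_f$ is chosen), and $\gamma_3=L_{33}^{-1}(\hat y_f-L_{31}\gamma_1-L_{32}\gamma_2)$. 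Hence $\gamma$ is uniquely determined by $(u_f,\hat y_f)$, and the regularizer becomes $\Phi(\gamma)=\|L_{33}^{-1}(\hat y_f-L_{31}\gamma_1-L_{32}\gamma_2)\|^2$.

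The key step, and the part I expect to require the most care, is to show that the choice $\gamma_3=0$ reproduces exactly the SPC prediction, i.e. $L_{31}\gamma_1+L_{32}\gamma_2=Y_f g^*_{\rm pinv}$. To do this I would write the stacked regressor as $[Z_p^\top~U_f^\top]^\top=L_{\rm sub}[Q_1^\top~Q_2^\top]^\top$ with $L_{\rm sub}=\begin{bmatrix} L_{11}&0\\ L_{21}&L_{22}\end{bmatrix}$, observe that $[Q_1^\top~Q_2^\top]^\top$ has orthonormal rows (since $Q_iQ_i^\top=I$ and $Q_iQ_j^\top=0$ for $i\neq j$) and that $L_{\rm sub}$ is square and invertible, and conclude $([Z_p^\top~U_f^\top]^\top)^\dagger=[Q_1^\top~Q_2^\top]L_{\rm sub}^{-1}$. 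Substituting $Y_f=L_{31}Q_1+L_{32}Q_2+L_{33}Q_3$ into $Y_f g^*_{\rm pinv}$ and using $Q_3Q_1^\top=Q_3Q_2^\top=0$ collapses the last block-row to zero, leaving $Y_f g^*_{\rm pinv}=[L_{31}~L_{32}]L_{\rm sub}^{-1}[z_p^\top~u_f^\top]^\top$, which is precisely $L_{31}\gamma_1+L_{32}\gamma_2$. Thus the SPC equality constraint $\hat y_f=Y_f g^*_{\rm pinv}$ in \eqref{eq: aDeePC0p} coincides with enforcing $\gamma_3=0$.

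Equivalently, after elimination the penalty term reads $\mu\,\Phi(\gamma)=\mu\,\|L_{33}^{-1}(\hat y_f-Y_f g^*_{\rm pinv})\|^2$, so \eqref{eq: r-gamma DeePC} becomes $\min\,\mathcal{J}(u_f,\hat y_f)+\mu\,\|L_{33}^{-1}(\hat y_f-Y_f g^*_{\rm pinv})\|^2$ over $u_f\in\mathbb{U},\ \hat y_f\in\mathbb{Y}$. Finally I would handle the limit $\mu\to\infty$ by a standard quadratic-penalty argument. Since $L_{33}$ is nonsingular, the penalty is nonnegative and vanishes if and only if $\hat y_f=Y_f g^*_{\rm pinv}$, i.e. exactly on the SPC-feasible set. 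As the SPC-feasible set is nonempty and $\mathcal{J}$ remains bounded there, any candidate with $\hat y_f\neq Y_f g^*_{\rm pinv}$ would incur cost growing without bound as $\mu\to\infty$, contradicting optimality; hence the penalized minimizers converge to minimizers of the equality-constrained problem \eqref{eq: DeePC0p}. Because $\mathcal{Q},\mathcal{R}\succ0$ makes $\mathcal{J}$ strictly convex in $(u_f,\hat y_f)$, the SPC minimizer is unique and pins down the limit, so the optimal pair $(u_f^*,\hat y_f^*)$ obtained from \eqref{eq: r-gamma DeePC} as $\mu\to\infty$ agrees with that of \eqref{eq: DeePC0p}. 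The only genuine subtlety is that the limit be attained rather than merely approached, which follows from continuity of $\mathcal{J}$ and closedness of $\mathbb{U},\mathbb{Y}$.
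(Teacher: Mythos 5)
Your proof is correct and follows essentially the same route as the argument this paper relies on (imported from \cite{breschi2023data} and mirrored in the supplementary proof of Lemma~\ref{corally1}): eliminating $\gamma$ through the triangular LQ structure, establishing the key identity $\bigl[Z_p^\top~U_f^\top\bigr]^{\top\dagger} = [Q_1^\top~Q_2^\top]\,L_{\rm sub}^{-1}$ so that $Y_f g^*_{\rm pinv} = [L_{31}~~L_{32}]\,L_{\rm sub}^{-1}\,[z_p^\top~u_f^\top]^\top = L_{31}\gamma_1 + L_{32}\gamma_2$, and passing to the limit $\mu\to\infty$ by a standard quadratic-penalty argument. Your verification of the Moore--Penrose identity via $Q_iQ_i^\top = I$, $Q_iQ_j^\top = 0$ and the nonsingularity of $L_{\rm sub}$ is sound, and your attention to attainment of the limit (using strict convexity from $\mathcal{Q},\mathcal{R}\succ 0$ and closedness of $\mathbb{U},\mathbb{Y}$) covers the only delicate point.
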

    

In \cite{breschi2023uncertaintyaware,breschi2023impact}, it was pointed out that regularizing $\gamma$ is related to regularizing $g$ in \eqref{eq: DeePC0}. For the latter, a notable projection-based regularizer was coined in \cite{dorfler2022bridging}, based on which the regularized DDPC (R-DDPC) problem is formulated as:
\begin{subequations}
        \label{eq: r-DeePC00}
        \begin{align}
        \min_{u_f,\hat  y_f,g}\quad&\mathcal{J}(u_f,\hat  y_f)+\mu \cdot \|(I-\Pi)g \|^2 \label{eq: rg-DeePC001}\\
        \mathrm{s.t.}\quad~&
        \label{eq: r-DeePC001}
        \begin{bmatrix} Z_p \\ U_f  \\ {Y}_f\end{bmatrix} g = 
    \begin{bmatrix} z_p \\ u_f\\\hat{y}_f \end{bmatrix},\\
        & u_f\in\mathbb{U},\ \hat{y}_f\in\mathbb{Y},
        \label{eq: r-DeePC002}
        \end{align}
    \end{subequations}
    where 
    $\Pi=\begin{bmatrix} Z_p \\ U_f\end{bmatrix}^{\dagger}\begin{bmatrix} Z_p \\ U_f\end{bmatrix}$
is a orthogonal projection matrix onto the column span of $[Z_p^\top~ U_f^\top]^\top$. Next we unveil the connection between \eqref{eq: r-gamma DeePC} and  \eqref{eq: r-DeePC00} for a suitable choice of $\Phi(\gamma)$.  

\begin{lemma}\label{corally1}
Solving the regularized problem \eqref{eq: r-gamma DeePC} with the norm-based regularizer $\Phi(\gamma)= \| \gamma_3\|^2$ yields the same control decision $u_f^*$ and output prediction $\hat{y}_f^*$ as those by solving the R-DDPC problem  \eqref{eq: r-DeePC00}.
\end{lemma}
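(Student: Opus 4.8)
The plan is to exploit the change of variables $\gamma = Qg$ induced by the LQ factorization \eqref{eq: LQ factorization}, together with the orthogonality relations $Q_iQ_i^\top=I$ and $Q_iQ_j^\top=0$ for $i\neq j$, and to show that both programs collapse onto one and the same reduced problem in the genuine decision variables $(u_f,\hat y_f)$. A first useful observation is that $L$ is square and non-singular, so the equality constraint $L\gamma=\mathbf{col}(z_p,u_f,\hat y_f)$ in \eqref{eq: r-gamma DeePC} pins down $\gamma$ \emph{uniquely} by forward substitution; in particular $\gamma_3 = L_{33}^{-1}(\hat y_f-L_{31}\gamma_1-L_{32}\gamma_2)$ is a fixed affine function of $(u_f,\hat y_f)$ once $z_p$ is given, and every $(u_f,\hat y_f)$ is feasible since $\mathrm{range}(L)$ is the full space.

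The heart of the argument is to re-express the projection regularizer of \eqref{eq: r-DeePC00} through the $Q$-blocks. Writing $M=\mathbf{col}(Z_p,U_f)=\mathbf{col}(L_{11}Q_1,\,L_{21}Q_1+L_{22}Q_2)$ and using that $L_{11},L_{22}$ are non-singular, I would show $\ker M=\ker Q_1\cap\ker Q_2$, which by the mutual orthogonality of the blocks equals $\mathrm{range}(Q_3^\top)\oplus\mathrm{range}(Q^\top)^\perp$. Hence the orthogonal projector $I-\Pi$ onto $\ker M$ decomposes any $g$ as $(I-\Pi)g=Q_3^\top(Q_3g)+g_\perp$, where $g_\perp$ is the component of $g$ orthogonal to every row of $Q$. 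Since these two pieces are orthogonal and $Q_3Q_3^\top=I$, this yields the key identity $\|(I-\Pi)g\|^2=\|Q_3g\|^2+\|g_\perp\|^2=\|\gamma_3\|^2+\|g_\perp\|^2$.

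The equivalence then follows from a two-sided comparison. Given any $g$ feasible for \eqref{eq: r-DeePC00}, setting $\gamma=Qg$ preserves the constraint, because $LQg=\mathbf{col}(Z_p,U_f,Y_f)g=\mathbf{col}(z_p,u_f,\hat y_f)$, with identical $(u_f,\hat y_f)$, and the identity above gives $\|\gamma_3\|^2\le\|(I-\Pi)g\|^2$. Conversely, given any $\gamma$ feasible for \eqref{eq: r-gamma DeePC}, setting $g=Q^\top\gamma$ preserves the constraint via $QQ^\top=I$ with identical $(u_f,\hat y_f)$, and since $g_\perp=0$ for such a $g$ it attains $\|(I-\Pi)g\|^2=\|\gamma_3\|^2$ with equality. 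Consequently both problems reduce to the single program $\min_{u_f\in\mathbb U,\,\hat y_f\in\mathbb Y}\ \mathcal J(u_f,\hat y_f)+\mu\|\gamma_3(u_f,\hat y_f)\|^2$ over the same feasible set, so their optimizers $(u_f^*,\hat y_f^*)$ coincide.

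The step I expect to be the main obstacle is the null-space decomposition: one must argue carefully that the residual term $\|g_\perp\|^2$, the part of $g$ lying outside the row space of the stacked Hankel matrix, can be annihilated (by the choice $g=Q^\top\gamma$) without disturbing either feasibility or $(u_f,\hat y_f)$, and that removing exactly this term is what reconciles the projection-based regularizer of R-DDPC with the $\|\gamma_3\|^2$ regularizer of $\gamma$-DDPC. The surrounding bookkeeping — the forward substitution and the norm identities — is routine once the orthogonality relations $Q_iQ_i^\top=I$, $Q_iQ_j^\top=0$ are in hand.
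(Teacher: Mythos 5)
Your proof is correct and takes essentially the same route as the paper's (supplementary) proof, which follows \cite{breschi2023data}: the change of variables $\gamma=Qg$ under the LQ factorization \eqref{eq: LQ factorization}, the identification $\ker\left(\mathbf{col}(Z_p,U_f)\right)=\mathrm{range}(Q_3^\top)\oplus\ker Q$ yielding $\|(I-\Pi)g\|^2=\|\gamma_3\|^2+\|g_\perp\|^2$, and the choice $g=Q^\top\gamma$ that annihilates $g_\perp$ while preserving feasibility and $(u_f,\hat{y}_f)$. Your bookkeeping is sound — the invertibility of the block-triangular $L$ pins down $\gamma$ uniquely, so the two-sided comparison correctly collapses both \eqref{eq: r-gamma DeePC} and \eqref{eq: r-DeePC00} onto the same reduced program in $(u_f,\hat{y}_f)$.
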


The proof of Lemma \ref{corally1} can be made by borrowing ideas from \cite{breschi2023data} and is presented in \textcolor{black}{Supplementary Information due to page limitation}. Indeed, Lemma \ref{corally1} extends Theorem \ref{thm: standard SPC} to cases with \textcolor{black}{finite} $\mu$. This provides a clear vision on the effect of regularizing $\gamma_3$, which in a similar spirit as the projection-based regularizer $\|(I-\Pi)g \|^2$ by trading-off implicit model identification against control cost minimization.
Thus, problem \eqref{eq: r-gamma DeePC} offers a unified regime of DDPC strategies at large, which enjoys interpretability, computational efficiency, and compatibility with regularizers. However, the lack of causality remains a critical issue. Next, building upon \eqref{eq: r-gamma DeePC}, we develop a \textcolor{black}{causal $\gamma$-DDPC and its regularized form}, which only ask for a slight modification of generic $\gamma$-DDPC and thus inherits above merits.

\section{Causality-Informed Data-Driven Predictive Control Formulation}

\subsection{\textcolor{black}{Causal form of SPC} }
Indeed, \eqref{eq: aDeePC0p} is the SPC predictor originally proposed in \cite{favoreel1999spc} and plays an equally critical role in subspace identification. To see this, we construct the following linear multi-step predictor:
\begin{equation}
    y_f = \underbrace{[K_p ~~ | ~~ K_f]}_{\triangleq K} \cdot \begin{bmatrix}
        z_p \\ u_f
    \end{bmatrix},
    \label{eq:multistep predictor}
\end{equation}
where $K_p$ and $K_f$ are associated with initial input/output trajectory and future inputs, respectively. An ordinary least-squares fitting can be made based on Hankel matrix data:
\begin{equation}
    \min_{K}~ \left \| Y_f - K \cdot \begin{bmatrix}
        Z_p \\ U_f
    \end{bmatrix} \right \|_F^2.
    \label{eq: LS fit}
\end{equation}
Without making any structural presumption on $K$, solving \eqref{eq: LS fit} yields the classic SPC predictor \cite{favoreel1999spc}, as already used in \eqref{eq: aDeePC0p}:
\begin{equation}
    K^* = Y_f \begin{bmatrix}
        Z_p \\ U_f
    \end{bmatrix}^\dagger,~\hat{y}_f = K^* \begin{bmatrix} z_p \\ u_f \end{bmatrix} = Y_f \begin{bmatrix}Z_p\\ {U}_f\end{bmatrix}^\dagger \begin{bmatrix} z_p \\ u_f \end{bmatrix}.
    \label{eq: SPC predictor}
\end{equation}
The SPC predictor has mostly emerged as an intermediate in subspace identification. In principle, $K_f$ is supposed be block lower-triangular in order to enforce strict causality, a fundamental attribute that shall be possessed by the multi-step predictor \eqref{eq:multistep predictor}. It was shown that the non-causal terms in $K_f$ will asymptotically vanish with $N_d \to \infty$. However, in a finite sample regime, the least-squares fit in \eqref{eq: LS fit} inevitably leads to a non-causal relation between $U_f$ and $Y_f$ \cite{m}, and consequently, increased error variances and risk of model over-fitting \cite{Qin2005}. To preserve causality of predictor, one may turn to a constrained least-squares formulation:
\begin{equation}
\begin{split}
    \min_{K}~& \left \| Y_f - K \cdot \begin{bmatrix}
        Z_p \\ U_f
    \end{bmatrix} \right \|_F^2 \\
    {\rm s.t.}~~& ~K = [K_p ~~ K_f],~ K_f\text{ is~lower-block~triangular}
\end{split}    
\label{eq: causal K}
\end{equation}
which has been studied in subspace identification \cite{Qin2005,m,peternell1996statistical}.

Our focus is then shifted towards data-driven optimal control tasks. By embedding the data-driven causality-informed solution \eqref{eq: causal K} of SPC into control design, we arrive at the following formulation called causal SPC (C-SPC), which is a bilevel program: 
\begin{subequations}
    \begin{align}
        \min_{u_f,\hat{y}_f} &~ \mathcal{J}(u_f,\hat{y}_f) \\
        {\rm s.t.}~ &~ \hat{y}_f = K^* \begin{bmatrix}
            z_p \\
            u_f
        \end{bmatrix},~ K^*~{\rm solves~\eqref{eq: causal K}}, \\
        &~ u_f \in \mathbb{U},~ \hat{y}_f \in \mathbb{Y}.
    \end{align}
    \label{eq: multi-step predictive control}
\end{subequations}
As compared to the generic SPC in \eqref{eq: DeePC0p}, C-SPC encodes prior knowledge of causality within $K^*$, which is expected to improve prediction accuracy and control performance.

\subsection{\textcolor{black}{Causal $\gamma$-DDPC}}
\textcolor{black}{Next we unveil a close connection between the causality-informed solution to the constrained least-squares problem \eqref{eq: causal K} and LQ factorization \eqref{eq: LQ factorization}, which, to the best of our knowledge, is a new result in both DDPC and subspace identification literature. }
\begin{lemma}\label{lemma1}
    The $i$th block row of $K^*$ in \eqref{eq: causal K}, denoted by $K^*_i$, can be compactly expressed as:
    \begin{align}
        &~ K^*_i \notag\\
        = & \left [ [L_{31,i} ~~ L_{32,i}^{\prime}] \begin{bmatrix}
            L_{11} & 0 \notag \\
            L_{21,[1:i]} & L_{22,[1:i],[1:i]}
        \end{bmatrix}^{-1} ~ \left | ~ 0_{p\times(L_f-i)m} \right . \right ] \notag \\
        = & ~[L_{31,i} ~~ L_{32,i}^{\prime} ~~ 0_{p\times (L_f-i)m}] \begin{bmatrix}
            L_{11} & 0 \\
            L_{21} & L_{22}
        \end{bmatrix}^{-1},\label{eq: Ki star}
        \end{align} 
where $L_{32,i}^{\prime} = L_{32,i}(:,1:im)$ indicates the first $i$ column blocks in the $i$th row block of $L_{32}$, and $L_{22,[1:i],[1:i]} = L_{22}(1:im,1:im)$ is the submatrix of $L_{22}$. As such, the optimal solution $K^*$ to \eqref{eq: causal K} can be succinctly expressed as:
\begin{equation}
    K^* = [L_{31} ~~ \mathcal{LT}_{p,m}(L_{32})] \begin{bmatrix}
            L_{11} & 0 \\
            L_{21} & L_{22}
        \end{bmatrix}^{-1}.
        \label{eq: K star}
\end{equation}
\end{lemma}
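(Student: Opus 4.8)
The plan is to exploit the separability of the Frobenius-norm objective in \eqref{eq: causal K} across the block rows of $K$, and then match each decoupled least-squares solution against the LQ factors of \eqref{eq: LQ factorization}. First I would observe that $\| Y_f - K\,\mathbf{col}(Z_p,U_f) \|_F^2 = \sum_{i=1}^{L_f} \| Y_{f,i} - K_i\,\mathbf{col}(Z_p,U_f) \|_F^2$, where $Y_{f,i}$ and $K_i$ denote the $i$th block rows of $Y_f$ and $K$, so that the causal constraint (lower-block-triangularity of $K_f$) separates into independent constraints on each block row. The crucial simplification is that, for the $i$th block row, causality forces the trailing blocks of $K_{f,i}$ to vanish, so $K_i$ acts only on the truncated regressor $\mathbf{col}(Z_p,U_{f,[1:i]})$, where $U_{f,[1:i]}$ collects the first $i$ block rows of $U_f$. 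Each subproblem thereby reduces to an unconstrained least-squares fit whose unique solution (the truncated regressor having full row rank) is $[K_{p,i}~~K_{f,i,[1:i]}] = Y_{f,i}\,\mathbf{col}(Z_p,U_{f,[1:i]})^\dagger$.

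Next I would rewrite the truncated regressor through \eqref{eq: LQ factorization}. Since the factor $L$ is lower-triangular, its diagonal block $L_{22}$ is itself lower-triangular, which lets me discard the columns of $L_{22}$ beyond the $im$th and write $\mathbf{col}(Z_p,U_{f,[1:i]}) = M_i\,\mathbf{col}(Q_1,\bar{Q}_2)$, with $M_i = \begin{bmatrix} L_{11} & 0 \\ L_{21,[1:i]} & L_{22,[1:i],[1:i]} \end{bmatrix}$ non-singular (being $L_{11}$ together with a leading principal submatrix of the non-singular lower-triangular $L_{22}$) and $\bar{Q}_2 = Q_2(1:im,:)$. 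Because $\mathbf{col}(Q_1,\bar{Q}_2)$ inherits orthonormal rows from the relations $Q_iQ_j^\top = \delta_{ij}I$, the pseudo-inverse collapses to $\mathbf{col}(Z_p,U_{f,[1:i]})^\dagger = \mathbf{col}(Q_1,\bar{Q}_2)^\top M_i^{-1}$. Substituting $Y_{f,i} = L_{31,i}Q_1 + L_{32,i}Q_2 + L_{33,i}Q_3$ and invoking the same orthogonality relations, the products $Y_{f,i}Q_1^\top$ and $Y_{f,i}\bar{Q}_2^\top$ sieve out exactly $L_{31,i}$ and $L_{32,i}'=L_{32,i}(:,1:im)$, which yields the first line of \eqref{eq: Ki star}.

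Finally, to pass to the second line of \eqref{eq: Ki star} and hence to \eqref{eq: K star}, I would use that $T = \begin{bmatrix} L_{11} & 0 \\ L_{21} & L_{22} \end{bmatrix}$ is block lower-triangular, so its inverse is too, and $M_i$ is precisely its leading principal submatrix. Padding the row $[L_{31,i}~~L_{32,i}']$ with $0_{p\times(L_f-i)m}$ and multiplying by $T^{-1}$ then reproduces $[L_{31,i}~~L_{32,i}']M_i^{-1}$ in the leading block columns and zeros thereafter, matching $K_i^* = [K_{p,i}~~K_{f,i,[1:i]}~~0]$. Stacking over $i$ and recognizing that $[L_{32,i}'~~0_{p\times(L_f-i)m}]$ is the $i$th block row of $\mathcal{LT}_{p,m}(L_{32})$ assembles \eqref{eq: K star}. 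I expect the main obstacle to lie in the bookkeeping of the middle step: carefully justifying that the lower-triangularity of $L_{22}$ truncates the regressor exactly to $M_i\,\mathbf{col}(Q_1,\bar{Q}_2)$, and that the orthonormality of $\mathbf{col}(Q_1,\bar{Q}_2)$ is what converts the pseudo-inverse into $\mathbf{col}(Q_1,\bar{Q}_2)^\top M_i^{-1}$, since the remaining extension and stacking are routine consequences of the block-triangular inverse structure.
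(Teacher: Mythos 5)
Your proposal is correct and follows essentially the same route as the paper's proof: decoupling \eqref{eq: causal K} across block rows, reducing each row to an unconstrained least-squares fit on the truncated regressor $\mathbf{col}(Z_p, U_{f,[1:i]})$, and evaluating $Y_{f,i}\,\mathbf{col}(Z_p, U_{f,[1:i]})^\dagger$ via the LQ factors and the triangular structure of $L_{22}$. In fact, you spell out the two steps the paper leaves implicit --- that $\mathbf{col}(Z_p,U_{f,[1:i]}) = M_i\,\mathbf{col}(Q_1,\bar{Q}_2)$ with $\mathbf{col}(Q_1,\bar{Q}_2)$ having orthonormal rows, whence the pseudo-inverse collapses to $\mathbf{col}(Q_1,\bar{Q}_2)^\top M_i^{-1}$, and that $M_i^{-1}$ sits as the leading principal block of the block lower-triangular inverse --- so your write-up is a more detailed rendering of the same argument.
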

\begin{proof}
Let us define
\begin{equation}
U_{f,[1:i]} = \begin{bmatrix}
        u_{d,[L_p+1:L_p+N_d-L+2]} \\
        \vdots \\
        u_{d,[L_p+i:L_p+N_d-L+i+1]}
\end{bmatrix},
\end{equation}
which is composed of the first $i$ block rows of $U_f$. Meanwhile, we define the $i$th block row of $Y_{f}$ as $Y_{f,i} = y_{d,[L_p+i:L_p+N_d-L+i+1]}$. Note that each row block of $K^*$ can be independently solved for in \eqref{eq: causal K}. More precisely, the last $(L_f-i)$ column blocks of $K^*_i$ are known to be zero that strictly assure causality, i.e. $K_i^* = [\bar{K}_i^*~~0_{p\times(L_f-i)m}]$, where
\begin{equation*}
\begin{split}
    \bar{K}_i^* & = \arg \min_{\bar{K}_i} \left \| Y_{f,i} - \bar{K}_i \cdot \begin{bmatrix}
        Z_p \\ U_{f,[1:i]}
    \end{bmatrix} \right \|_F^2 = Y_{f,i} \begin{bmatrix}
        Z_p \\ U_{f,[1:i]}
    \end{bmatrix}^{\dagger} \\
    & = [L_{31,i} ~~ L_{32,i}^{\prime} ~~ 0_{p\times (L_f-i)m}] \begin{bmatrix}
            L_{11} & 0 \\
            L_{21} & L_{22}
        \end{bmatrix}^{-1}
\end{split}
\end{equation*}
is the solution to the constrained least-squares problem based on the LQ factorization as well as the triangular structure of $L_{22}$. Stacking \eqref{eq: Ki star} eventually yields \eqref{eq: K star}.
\end{proof}

\begin{remark}\label{remark:causal}
In the generic non-causal SPC predictor \eqref{eq: LS fit}, the fitting residual of least-squares is given by $\tilde{Y}_f = L_{33}Q_3$. When using the causal multi-step predictor $K^*$ in \eqref{eq: causal K}, the residual of constrained least-squares fitting is given by: $\tilde{Y}_f = Y_f - K^* \cdot \mathbf{col}(Z_p,U_f) = L^\prime_{32}Q_2 + L_{33}Q_3$,
where $L^\prime_{32} = L_{32} - \mathcal{LT}_{p,m}(L_{32})$ encloses non-causal components in $L_{32}$. 
It then directly follows that $\left \| L^\prime_{32}Q_2 + L_{33}Q_3 \right \|_F^2 \geq \left \| L_{33}Q_3 \right \|_F^2,$
where the equality holds if and only if $L^\prime_{32} = 0$. \textcolor{black}{Besides, the non-causal multi-step predictor $K^*$ in \eqref{eq: LS fit} has $pL_f\times[(m+p)L_p+mL_f]$ free parameters in total, which are \( pm L_f(L_f-1)/2 \) more than those in the causal predictor \eqref{eq: causal K}}. 
Henceforth, due to a smaller fitting residual \textcolor{black}{and more free parameters,} the non-causal multi-step predictor tends to embody more noise components than the causal one. This indicates that using the causal predictor can better hedge against data overfitting and suppress out-of-sample error, which can be beneficial for the control performance. 
\end{remark}

Lemma \ref{lemma1} indicates that LQ factorization \eqref{eq: LQ factorization} implies an \textcolor{black}{simple} solution to the constrained least-squares problem \eqref{eq: causal K}, which we are unaware of in literature. Based on this, we derive a new causal $\gamma$-DDPC (C-$\gamma$-DDPC) and its  equivalence with the C-SPC \eqref{eq: multi-step predictive control}. 
\begin{theorem} [Equivalence between C-$\gamma$-DDPC and C-SPC] \label{them-ca}
Solving the following data-driven control problem 
\begin{equation}
    \begin{aligned}
    \hspace{-20pt}\min_{u_f,\hat{y}_f,\gamma}& \mathcal{J}(u_f,\hat{y}_f) \\
        {\rm s.t.}~& \begin{bmatrix}
        L_{11} & 0& 0 & 0\\
            L_{21} & L_{22} & 0 & 0\\
            L_{31} & \mathcal{LT}_{p,m}(L_{32}) &L^\prime_{32}& L_{33}
        \end{bmatrix} \begin{bmatrix} 
        \gamma_1 \\ \gamma_2 \\ \gamma'_{2} \\ \gamma_3
        \end{bmatrix} = \begin{bmatrix} 
        z_p\\u_f \\ \hat{y}_f 
        \end{bmatrix}
       \\
        &~ ~ \gamma_{2}^\prime = 0, ~ \gamma_{3} = 0,~ u_f \in \mathbb{U},~ \hat{y}_f \in \mathbb{Y}  
    \end{aligned}\label{eq:30-enforced}
\end{equation}
yields the same control decision $u_f^*$ and output prediction $\hat{y}_f^*$ as those by solving the bilevel problem \eqref{eq: multi-step predictive control}.
\end{theorem}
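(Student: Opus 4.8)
The plan is to show that the equality-constrained system in \eqref{eq:30-enforced}, once the constraints $\gamma_2'=0$ and $\gamma_3=0$ are imposed, reproduces exactly the causal predictor $K^*$ from Lemma~\ref{lemma1}, so that \eqref{eq:30-enforced} is merely an implicit encoding of the C-SPC predictor used in \eqref{eq: multi-step predictive control}. First I would substitute $\gamma_2'=0$ and $\gamma_3=0$ into the block-triangular constraint. The first two block rows then read $L_{11}\gamma_1 = z_p$ and $L_{21}\gamma_1 + L_{22}\gamma_2 = u_f$, which uniquely determine $(\gamma_1,\gamma_2)$ since $L_{11}$ and $L_{22}$ are nonsingular; explicitly, $\mathbf{col}(\gamma_1,\gamma_2) = \left[\begin{smallmatrix} L_{11} & 0 \\ L_{21} & L_{22} \end{smallmatrix}\right]^{-1}\mathbf{col}(z_p,u_f)$. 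The third block row then forces $\hat{y}_f = L_{31}\gamma_1 + \mathcal{LT}_{p,m}(L_{32})\gamma_2$, and substituting the solved values gives precisely
\begin{equation*}
\hat{y}_f = [L_{31}~~\mathcal{LT}_{p,m}(L_{32})] \begin{bmatrix} L_{11} & 0 \\ L_{21} & L_{22} \end{bmatrix}^{-1} \begin{bmatrix} z_p \\ u_f \end{bmatrix} = K^* \begin{bmatrix} z_p \\ u_f \end{bmatrix},
\end{equation*}
where the last equality is exactly the closed form \eqref{eq: K star} established in Lemma~\ref{lemma1}.

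Having matched the predictor maps, the next step is to argue equivalence of the two optimization problems as a whole, not just of the predictor relations. The key observation is that, under the imposed constraints, the decision variable $\gamma$ is fully pinned down as an affine function of $(z_p,u_f)$, so the only free decision variables remaining in \eqref{eq:30-enforced} are $u_f$ and $\hat{y}_f$, with $\hat{y}_f$ itself determined by $u_f$ through $\hat{y}_f = K^*\,\mathbf{col}(z_p,u_f)$. This is identical to the feasible set of the outer problem in the bilevel program \eqref{eq: multi-step predictive control}, whose inner problem has the unique minimizer $K^*$ by Lemma~\ref{lemma1}. Since the two problems share the same objective $\mathcal{J}(u_f,\hat{y}_f)$, the same prediction constraint $\hat{y}_f = K^*\mathbf{col}(z_p,u_f)$, and the same admissible sets $u_f\in\mathbb{U}$, $\hat{y}_f\in\mathbb{Y}$, they are optimizations over identical feasible sets with identical costs, hence yield the same optimizers $u_f^*$ and $\hat{y}_f^*$.

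The main obstacle I anticipate is not the algebra of the substitution but the bookkeeping that ensures the constraint block row genuinely realizes $K^*$ and nothing more. Concretely, one must verify that partitioning $L_{32}\gamma_2$ into the causal part $\mathcal{LT}_{p,m}(L_{32})\gamma_2$ and the residual $L_{32}'\gamma_2'$ is consistent with splitting the single variable $\gamma_2$ in \eqref{eq: r-gamma DeePC} into the augmented pair $(\gamma_2,\gamma_2')$; that is, the augmented system in \eqref{eq:30-enforced} is a lossless reparametrization of the original LQ constraint \eqref{eq: r-gamma DeePC1}, recovering it when $\gamma_2'$ is unconstrained and collapsing to the causal predictor when $\gamma_2'=0$. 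I would make this explicit by noting $L_{32} = \mathcal{LT}_{p,m}(L_{32}) + L_{32}'$ so that setting $\gamma_2' = \gamma_2$ recovers $L_{32}\gamma_2$ exactly, confirming that the only structural change from non-causal $\gamma$-DDPC is the enforcement of $\gamma_2'=0$, which deletes precisely the non-causal components flagged in Remark~\ref{remark:causal}. Once this identification is secured, the equivalence follows from the uniqueness of $(\gamma_1,\gamma_2)$ and the closed form of Lemma~\ref{lemma1}.
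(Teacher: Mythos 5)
Your proof is correct and follows essentially the same route as the paper's: you substitute $\gamma_2'=0$, $\gamma_3=0$, solve the nonsingular triangular system for $(\gamma_1,\gamma_2)$ as an affine function of $(z_p,u_f)$, and invoke the closed form \eqref{eq: K star} from Lemma~\ref{lemma1} to identify the induced predictor with $K^*$, exactly as the paper does via its definition \eqref{Defi} and substitution \eqref{eq: u_f}--\eqref{eq: y_fhat}. Your additional remarks --- the explicit feasible-set comparison and the check that $L_{32}=\mathcal{LT}_{p,m}(L_{32})+L_{32}'$ makes the augmented constraint a lossless reparametrization of \eqref{eq: r-gamma DeePC1} --- are sound bookkeeping that the paper leaves implicit, not a different argument.
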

\begin{proof} Defining 
\begin{equation}\label{Defi}
    \begin{bmatrix}
        \gamma_p \\
        \gamma_f \\
    \end{bmatrix} = \begin{bmatrix}
            L_{11} & 0 \\
            L_{21} & L_{22}
        \end{bmatrix}^{-1} \begin{bmatrix}
            z_p \\
            u_f
        \end{bmatrix},
\end{equation}
it follows that  
\begin{equation}
\gamma_p = L_{11}^{-1}z_p,~ u_f = L_{21}L_{11}^{-1}z_p + L_{22}\gamma_f.
    \label{eq: u_f}
\end{equation}
In virtue of Lemma \ref{lemma1}, the predictor in \eqref{eq: multi-step predictive control} can be equivalently expressed as:
    \begin{equation}
        \hat{y}_f = [L_{31} ~~ \mathcal{LT}_{p,m}(L_{32})] \begin{bmatrix}
            L_{11} & 0 \\
            L_{21} & L_{22}
        \end{bmatrix}^{-1} \begin{bmatrix}
            z_p \\
            u_f
        \end{bmatrix}.
        \label{eq: y_fhat}
    \end{equation}
Combining \eqref{Defi}-\eqref{eq: y_fhat} as a substitution of the multi-step predictor in \eqref{eq: multi-step predictive control} yields the desired result.
\end{proof}

Recall that \textcolor{black}{in Theorem \ref{thm: standard SPC}}, the classical SPC problem \eqref{eq: DeePC0p} can be recast into the LQ factorization-based \textcolor{black}{$\gamma$-DDPC} with $\Phi(\gamma)= \| \gamma_3 \|^2$ and $\mu \to \infty$. Notably, as compared to \textcolor{black}{$\gamma$-DDPC} \cite[\mbox{Eq.}~(51)]{breschi2023data}, the new causality-informed version of \textcolor{black}{C-$\gamma$-DDPC} in \eqref{eq:30-enforced} only calls for an additional block-triangularization of $L_{32}$, which can be interpreted as removing the ``non-causal" entries in order to enforce causality of multi-step prediction. In this sense, the proposed \textcolor{black}{C-$\gamma$-DDPC} in \eqref{eq:30-enforced} is as easy to solve as the generic \textcolor{black}{$\gamma$-DDPC}.

\textcolor{black}{The receding horizon implementation of \textcolor{black}{C-$\gamma$-DDPC} is outlined in Algorithm 1.} \textcolor{black}{Indeed, $\gamma_1^* = L_{11}^{-1} z_p$ can be computed prior to the receding horizon implementation. Under constraints $\gamma_{2}^\prime = 0$ and $\gamma_3 = 0$, the residuals of identifying the implicit causal multi-step predictor are ignored, while $\gamma_2$ acts as the only decision variable characterizing the input-output relation between $u_f$ and $\hat{y}_f$ for future prediction. The reformulation \eqref{eq:30-enforced} can be computationally advantageous, especially when adopted in a receding horizon fashion. More precisely, the dimension of decision variables $\gamma_2 \in \mathbb{R}^{(m+n)L_f}$ in \eqref{eq:30-enforced} is likely to be much smaller than that of $g \in \mathbb{R}^{N_d - L + 1}$ in \eqref{equation: DeePC division}, especially when $N_d$ is large.}

\begin{algorithm}[htbp]
	\caption{Receding horizon implementation of \textcolor{black}{C-$\gamma$-DDPC}}\label{alg1}	
	\begin{tabular}{ll}
		\textbf{Inputs:} 
	& Data $\{U_d,Y_d\}$; Trajectory  $z_{p}=\{u_p, y_p\}$; \\
        & Reference $r(t)$;  Matrices $\mathcal Q\succeq 0$ and $\mathcal R\succeq 0$; \\
        & Constraint sets $\mathbb{U}$ and $\mathbb{Y}$; \\
	\end{tabular}
        \algblock{Repeat}{EndRepeat}
        \algblockdefx{Repeat}{EndRepeat}{\textbf{repeat}}{\textbf{end}\ }
	\begin{algorithmic}[1]
         \State Compute $\{ L_{ij} \}_{i=1}^3,j=1,2,3$ via LQ factorization \eqref{eq: LQ factorization};
        \State Obtain $\gamma_1^* = L_{11}^{-1} z_p$; 
  	\Repeat
	\State Solve problem \eqref{eq:30-enforced} for $\{u^*_f(t)\}$;
	\State Apply the first optimal input in $\{u_f^*(t)\}$;
	\EndRepeat 
	\end{algorithmic}
\end{algorithm}

\textcolor{black}{
\begin{remark}\label{choselp}
In Theorem \ref{Fundamental Lemma}, it is required that $L_{\rm p}\ge\ell(A,B,C,D)$ for DDPC of deterministic systems \cite{markovsky2008data}. However, this may not be suitable in handling stochastic systems, because it is necessary to choose $L_{\rm p}$ sufficiently large to ensure $\| (A-KC)^{L_{\rm p}}\|_F\approx0$ \cite{chiuso2007role}, where $K$ is the steady state Kalman gain of \eqref{equation: LTI system}. 
 \end{remark}}


\begin{remark}[Comparison with segmented DDPC (S-DDPC) \cite{o2022data}]
    In \cite{o2022data}, S-DDPC was developed  by segmenting input-output trajectories within the control horizon $L_f$, which enables to partially alleviate the non-causality of DDPC. However, the induced implicit predictor is not completely causal. In addition, it requires $L_f$ to be a multiple of $L_p$, which is more restrictive than the proposed \textcolor{black}{C-$\gamma$-DDPC}.
\end{remark}

\subsection{Regularized causal $\gamma$-DDPC}
\textcolor{black}{The causal reformulation \eqref{eq:30-enforced} can also be relaxed using regularization to flexibly regulate the bias-variance trade-off. } We propose to drop two equalities in \eqref{eq:30-enforced} and regularize both $\gamma'_2$ and $\gamma_3$:
\begin{equation}\label{equation:regularized}
    \begin{aligned}
       \hspace{-10pt}\min_{u_f,\hat{y}_f,\gamma} & ~ \mathcal{J}(u_f,\hat{y}_f) +\lambda  \cdot \| \gamma'_{2} \|^2+ \mu  \cdot \| \gamma_3\|^2 \\
        {\rm s.t.}~& \begin{bmatrix}
        L_{11} & 0& 0 & 0\\
            L_{21} & L_{22} & 0 & 0\\
            L_{31} & \mathcal{LT}_{p,m}(L_{32}) & L_{32}^{\prime} & L_{33}
        \end{bmatrix} \begin{bmatrix} 
        \gamma_1 \\ \gamma_2 \\ \gamma'_{2} \\ \gamma_3
        \end{bmatrix} = \begin{bmatrix} 
        u_f \\ \hat{y}_f 
        \end{bmatrix} \\
        &~ u_f \in \mathbb{U},~ \hat{y}_f \in \mathbb{Y}
    \end{aligned}
\end{equation}
where $\lambda \ge 0$ and $\mu \ge 0$ are tuning parameters. Here, a hybrid regularizer $\lambda \cdot \| \gamma'_{2} \|^2 + \mu  \cdot \| \gamma_3\|^2$ yields a natural extension of the regularized non-causal formulation \eqref{eq: r-gamma DeePC} to the causality-informed setup. Notably, when solving \eqref{equation:regularized} with finite $\lambda$ and $\mu$, the implicit multi-step predictor no longer adheres strictly to the causal predictor identified by \eqref{eq: causal K}, which sacrifices some prediction accuracy for possible improvement of control performance. \textcolor{black}{To fine-tune regularization parameters in DDPC, the trial-and-error approach based on closed-loop experiments has been commonly utilized  \cite{breschi2023tuning}. More recently, some novel guidelines have been presented by \cite{breschi2023uncertaintyaware,breschi2023impact}, which enable to tune weights in regularized $\gamma$-DDPC prior to its deployment while circumventing closed-loop experiments. Since \eqref{equation:regularized} extends the generic regularized $\gamma$-DDPC, the guidelines in \cite{breschi2023uncertaintyaware,breschi2023impact} are also helpful for selecting $\mu$ and $\lambda$ in \eqref{equation:regularized}.}

\section{Numerical Case Studies}

\subsection{DDPC of stochastic LTI systems}
We consider a stochastic LTI system \eqref{equation: LTI system} where $w(t)$ and $v(t)$ are zero-mean white Gaussian noise. Its state-space model can be expressed in an innovation form:
\begin{equation*}
    \label{equation: system in innovation form}
    \left \{ 
    \begin{aligned}
        x(t+1)&=Ax(t)+Bu(t)+Ke(t)\\
        y(t)&=Cx(t)+Du(t)+e(t)
    \end{aligned}\right.
\end{equation*}
where the innovation $e(t)\sim\mathcal{N}(0,\sigma_e^2)$. The system matrices are given by
$
        A=\begin{bmatrix}
            0.7326&-0.0861\\0.1722&0.9909
        \end{bmatrix},
        ~B=\begin{bmatrix}
            0.0609\\0.0064
        \end{bmatrix},
        C=\begin{bmatrix}
            0&1.4142
        \end{bmatrix}, D=1,~K=[-0.3645 \quad 0.9973]^{\top}
$.
For offline data collection under open-loop condition, \textcolor{black}{a square wave with a period of $200$ time-steps and amplitude of $3$} is used as the persistently exciting input $\{u_d(i)\}_{i=1}^{N_d}$ in open-loop operations. We implement the following control strategies in a receding horizon fashion: (a) {C-$\gamma$-DDPC} \eqref{eq:30-enforced}; (b) {S-DDPC} \cite{o2022data}; (c) {$\gamma$-DDPC} \cite{breschi2023data}; (d) \textcolor{black}{{KF-MPC}, i.e. the oracle MPC with perfectly known $(A,B,C,D,K)$ and a Kalman filter implemented for online state estimation; (e) MOESP-KF-MPC, i.e. the indirect MPC with $(A,B,C,D,K)$ first identified from offline data using the Multivariable Output Error State-Space (MOESP) algorithm \cite{verhaegen1992subspace} and a Kalman filter implemented for online state estimation. Note that the latter two model-based methods inherently ensure causality of prediction through state-space equations.} \textcolor{black}{Following Remark \ref{choselp}, we set $L_p=15$ in DDPC, which ensures $\|(A-KC)^{L_{\rm p}}\|_2\le0.005$}, and $L_f=30$, $\mathcal{Q} = I_{p\times L_f}$, $\mathcal{R} = 0.05I_{m\times L_f}$, $\mathbb{U}=\{u(t)|-2\le u(t)\le2\}$, $\mathbb{Y}=\{\hat{y}(t)|-2\le\hat{y}(t)\le2\}$, $r(t)=\mathrm{sin}(2\pi t/N_c),\ t=1,2,\cdots,N_c$, where $N_c=60$ is the simulation duration. 
To evaluate the control performance, the index $ J=\sum_{t=1}^{N_c}||y(t) - r(t)||_\mathcal{Q}^2 + \sum_{t=1}^{N_c}||u(t)||_\mathcal{R}^2$ is used.
\begin{figure}[htbp]
    \centering

    \subfigure[$N_d$ = $200$]{
        \includegraphics[width=0.88\linewidth,clip,trim= 0 8 0 25]{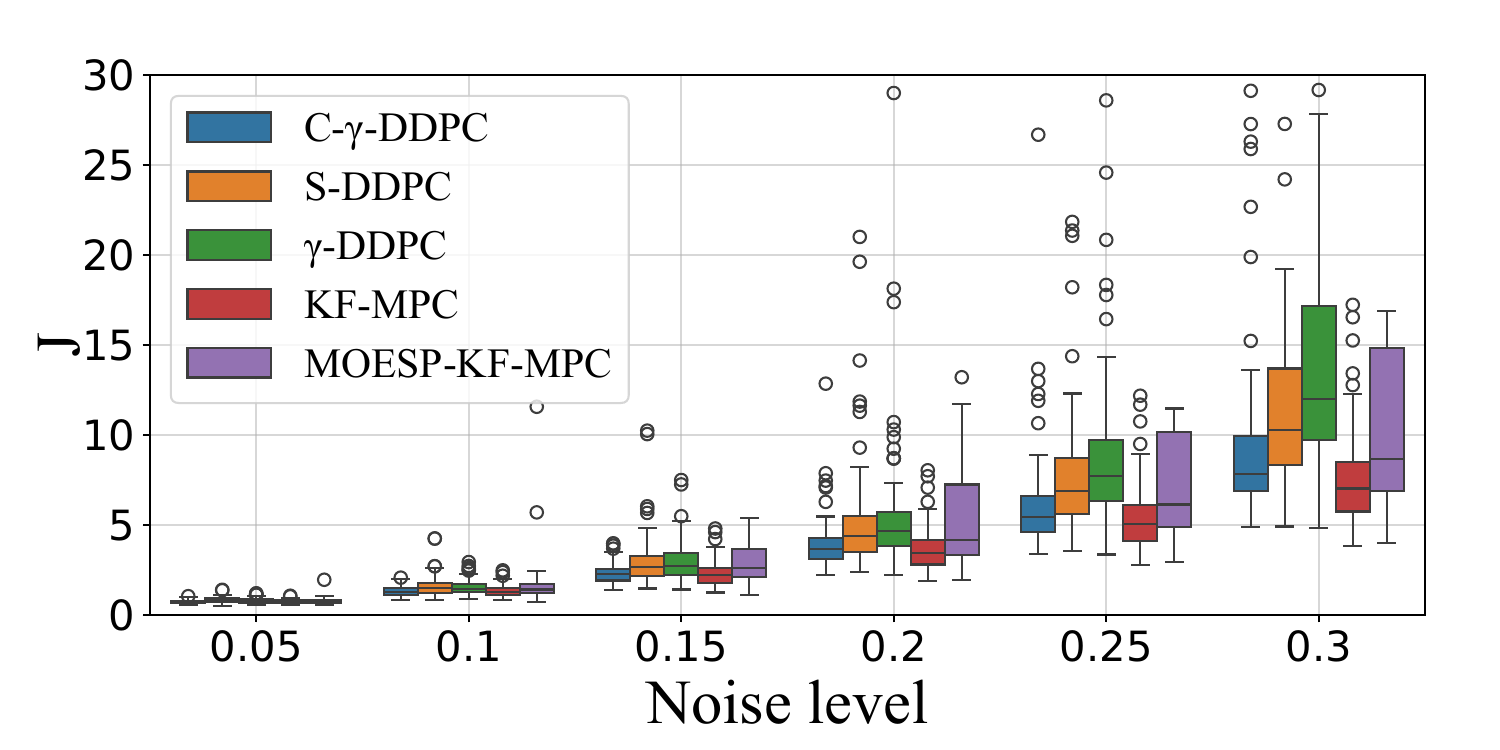}}

 \subfigure[$N_d$ = $400$]{
         \includegraphics[width=0.88\linewidth,clip,trim= 0 8 0 25]{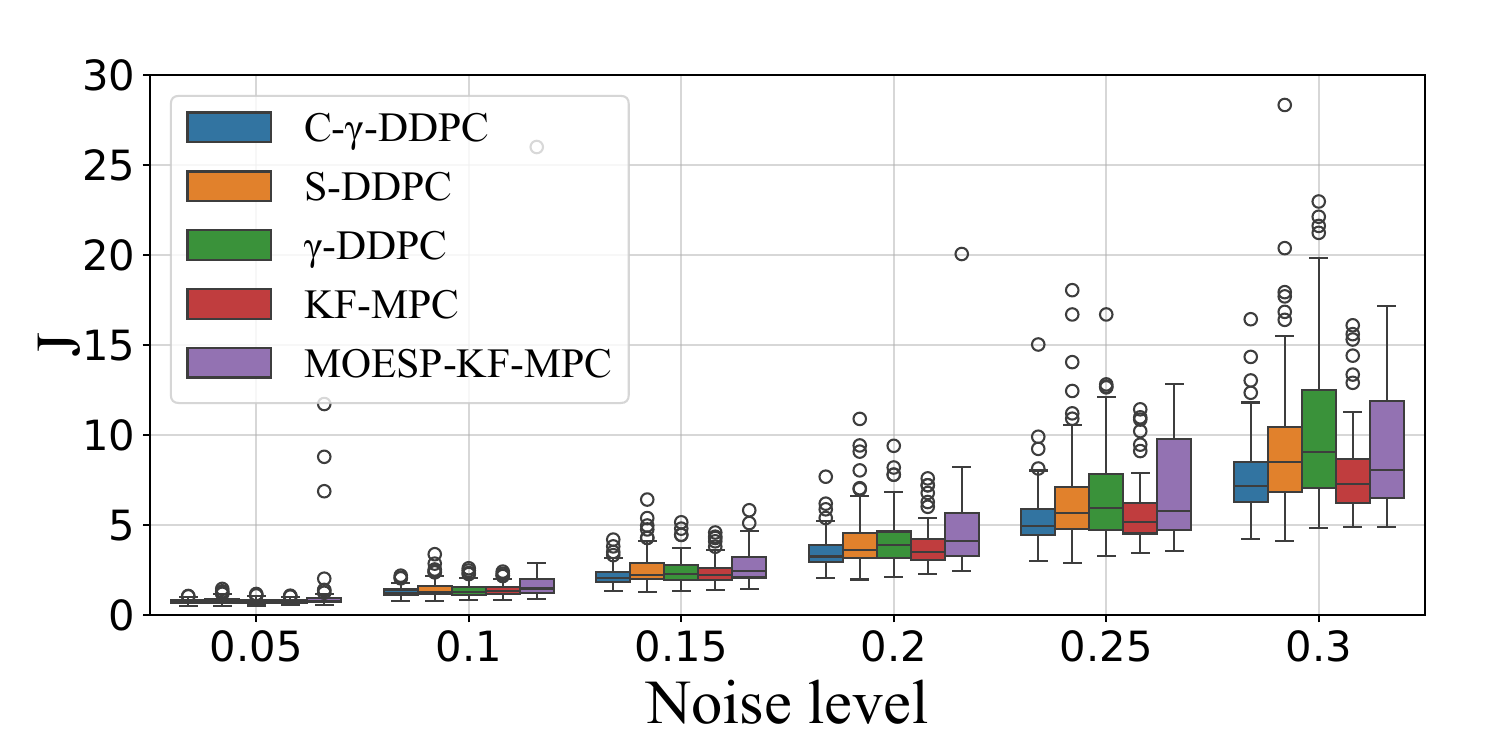}}

    \subfigure[$N_d$ = $600$]{
       \includegraphics[width=0.88\linewidth,clip,trim= 0 8 0 25]{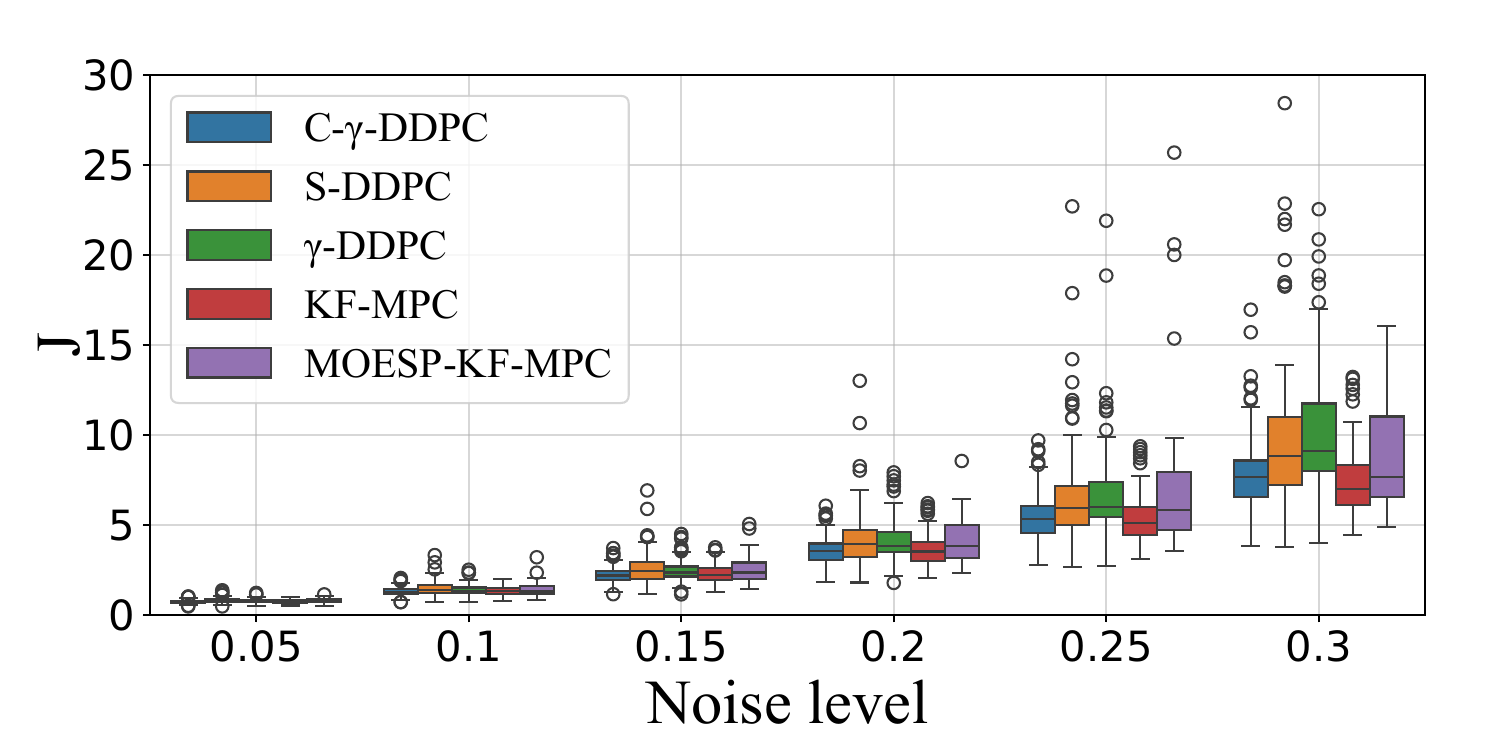}}

    \caption{Control performance of different approaches of LTI system with open-loop data collection in $100$ Monte Carlo simulations.}
    \label{fig:open-prediction performance}
\end{figure}

We create a variety of cases with different dataset sizes and noise levels, by letting $N_d=200$, $400$, and $600$, and varying $\sigma_e$ from $0.05$ to $0.3$ in increments of $0.05$. In each case, $100$ Monte Carlo runs are carried out to comprehensively evaluate the closed-loop control performance, and the results are shown in Fig. \ref{fig:open-prediction performance}. \textcolor{black}{It can be seen that when $\sigma_e=0$, three versions of DDPC exhibit identical performance because the multi-step predictors implicitly identified are perfectly causal. As the noise becomes heavier, the performance of DDPC gets negatively impacted, whereas the proposed C-$\gamma$-DDPC shows the best performance and the narrowest gap with the oracle KF-MPC. When the noise level is higher and $N_d$ is smaller, the outperformance of C-$\gamma$-DDPC becomes more evident. This is because for S-DDPC and $\gamma$-DDPC without fully ensuring causality, a heavier overfitting will occur for larger $\sigma_e$ and smaller $N_d$, which can be effectively alleviated by C-$\gamma$-DDPC. Meanwhile, without having to identify $(A,B,C,D,K)$ and run a Kalman filter, C-$\gamma$-DDPC also outperforms the indirect MOESP-KF-MPC in most cases. Thus, the causality-informed DDPC is attractive because of its easier implementation and better performance.} \textcolor{black}{We also investigate the computational cost. Under $ N_d = 200 $ and $ \sigma_e = 0.35 $, the average solution times for C-$\gamma$-DDPC, $\gamma$-DDPC, and KF-MPC are 0.85s, 0.87s, and 1.23s, respectively,\footnote{We solve all quadratic programs using OSQP \cite{stellato2020osqp} on a desktop computer with an Intel Core(TM) i7-12700F CPU and 16 GB RAM.} indicating that $\gamma$-DDPC and its causal version enjoy computational efficiency.}




Next, we delve into the performance of three regularized DDPC methods, i.e. {\textcolor{black}{RC-$\gamma$-DDPC} \eqref{equation:regularized}}, RS-DDPC \cite{o2022data} and {\textcolor{black}{R-$\gamma$-DDPC \cite{breschi2023uncertaintyaware}}}, which are relaxations of {\textcolor{black}{C-$\gamma$-DDPC}}, {\textcolor{black}{S-DDPC}} and {\textcolor{black}{$\gamma$-DDPC}}, respectively. \textcolor{black}{The regularization parameters are optimally selected from a logarithmically spaced grid of $100$ points within $[10^{-5},10^{5}]$}. We carry out 100 Monte Carlo simulations under $N_d = 200,400,600$ and $\sigma_e = 0.35$. For a clear comparison, we normalize control costs of various methods such that the control cost of \textcolor{black}{RC-$\gamma$-DDPC} is 1. The results are outlined in Table \ref{tab:nonlinear-open-prediction performance}, where the results of \textcolor{black}{C-$\gamma$-DDPC} without regularization are also reported. It can be seen \textcolor{black}{RC-$\gamma$-DDPC} performs invariably the best under all circumstances.  As $N_d$ increases, the advantage of \textcolor{black}{RC-$\gamma$-DDPC} over \textcolor{black}{RS-DDPC} and \textcolor{black}{R-$\gamma$-DDPC} gradually vanishes, mainly because the lack of causality in the latter two methods can be better alleviated using more data. Meanwhile, \textcolor{black}{RC-$\gamma$-DDPC} outperforms \textcolor{black}{C-$\gamma$-DDPC}, and their gap becomes smaller with $N_d$ increasing. This showcases the usefulness of relaxation in the absence of abundant data. 
\begin{table}[htb]
    \centering
    \caption{Control Costs of LTI and Nonlinear Systems}
    \label{tab:nonlinear-open-prediction performance}
    \begin{tabular}{ccccc}
  \toprule
  & &{$N_d=200$}&{$N_d=400$}&{$N_d=600$}\\
    \midrule
   \multirow{4}{*}{\makecell[c]{LTI\\System\\($\sigma_e=0.35$)} }
&   \textcolor{black}{RC-$\gamma$-DDPC}&1&1&1\\
&    \textcolor{black}{RS-DDPC}&1.1689&1.0970&1.0508\\
&    \textcolor{black}{R-$\gamma$-DDPC}&1.3140&1.1190&1.0933\\
&    \textcolor{black}{C-$\gamma$-DDPC}&1.0581&1.0460&1.0214\\
   \midrule
  \multirow{4}{*}{\makecell[c]{Nonlinear\\System\\($\sigma_e=0.3$)} }
&    \textcolor{black}{RC-$\gamma$-DDPC}&1&1&1\\
&    RS-DDPC&1.2094&1.0893&1.1024\\
&    R-$\gamma$-DDPC&1.4660&1.1674&1.1360\\
&    \textcolor{black}{C-$\gamma$-DDPC}&1.1113&1.0828&1.0350\\
  \bottomrule
    \end{tabular}
\end{table}

\subsection{DDPC of nonlinear systems}
Real-world dynamical systems are inevitably subject to nonlinearity. Even if DDPC uses an implicit linear predictor, its effectiveness in tackling nonlinear systems has been validated by a number of empirical findings even if a wrong model class is selected \cite{huang2021quadratic,huang2021robust}. Now, we examine the efficacy of the proposed causality-informed formulations of DDPC through the following nonlinear system:
\begin{equation*}
    \label{equation: nonlinear system}
    \left \{
     \begin{aligned}
    x(t+1)&=A\Tilde x(t)+B\Tilde{u}(t)+Ke(t)\\
    y(t)&=Cx(t)+D\Tilde{u}(t)+e(t)
    \end{aligned}\right.
\end{equation*}
\textcolor{black}{where $\Tilde{x}(t)=(1-\varepsilon)x(t)+0.5\varepsilon x^3(t)$ and $\Tilde{u}(t)=(1-\varepsilon)u(t)+\varepsilon[{\rm sin}(u(t))+2u^3(t)]$ are monotonically increasing around zero, which is an equilibrium point. Thus, a linearized model yields a good approximation of the system near the equilibrium.} The degree of system nonlinearity is characterized by $\varepsilon \in [0,1]$. 
When $\varepsilon = 0$, the model dynamics is entirely linear, and the level of nonlinearity increases with $\varepsilon$.
\textcolor{black}{The reference $r(t)$ and system matrices remain the same as the previous subsection.} 

\begin{figure}[!t]
     \centering
 \subfigure[$\sigma_e=0$]{
\includegraphics[width=0.38\textwidth,clip,trim= 0 12 18 35]{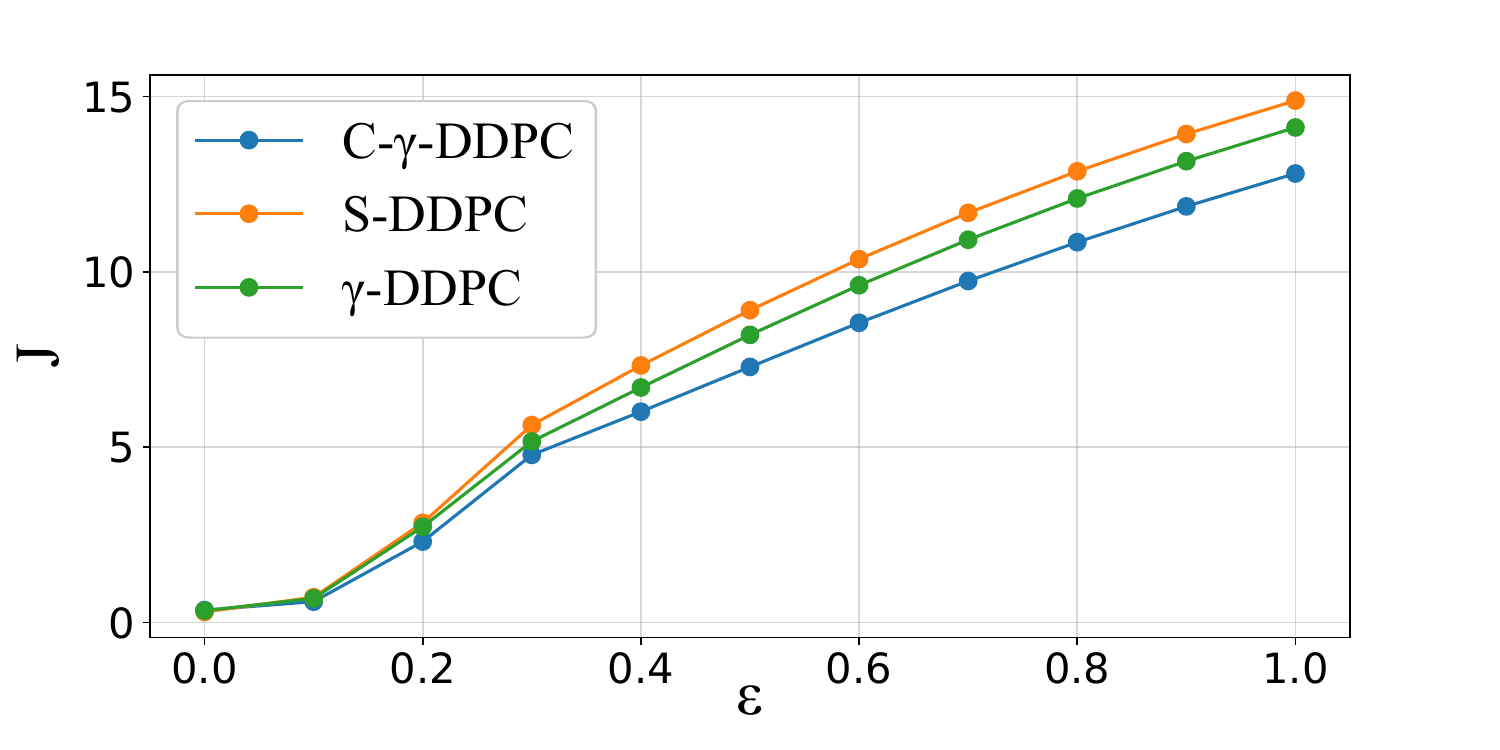}}
 \label{fig:a-nonlinear-open-C-SPC, S-SPC, and SPC}
     \subfigure[$\sigma_e=0.05$]{
       \includegraphics[width=0.36\textwidth,clip,trim= 10 12 0 35]{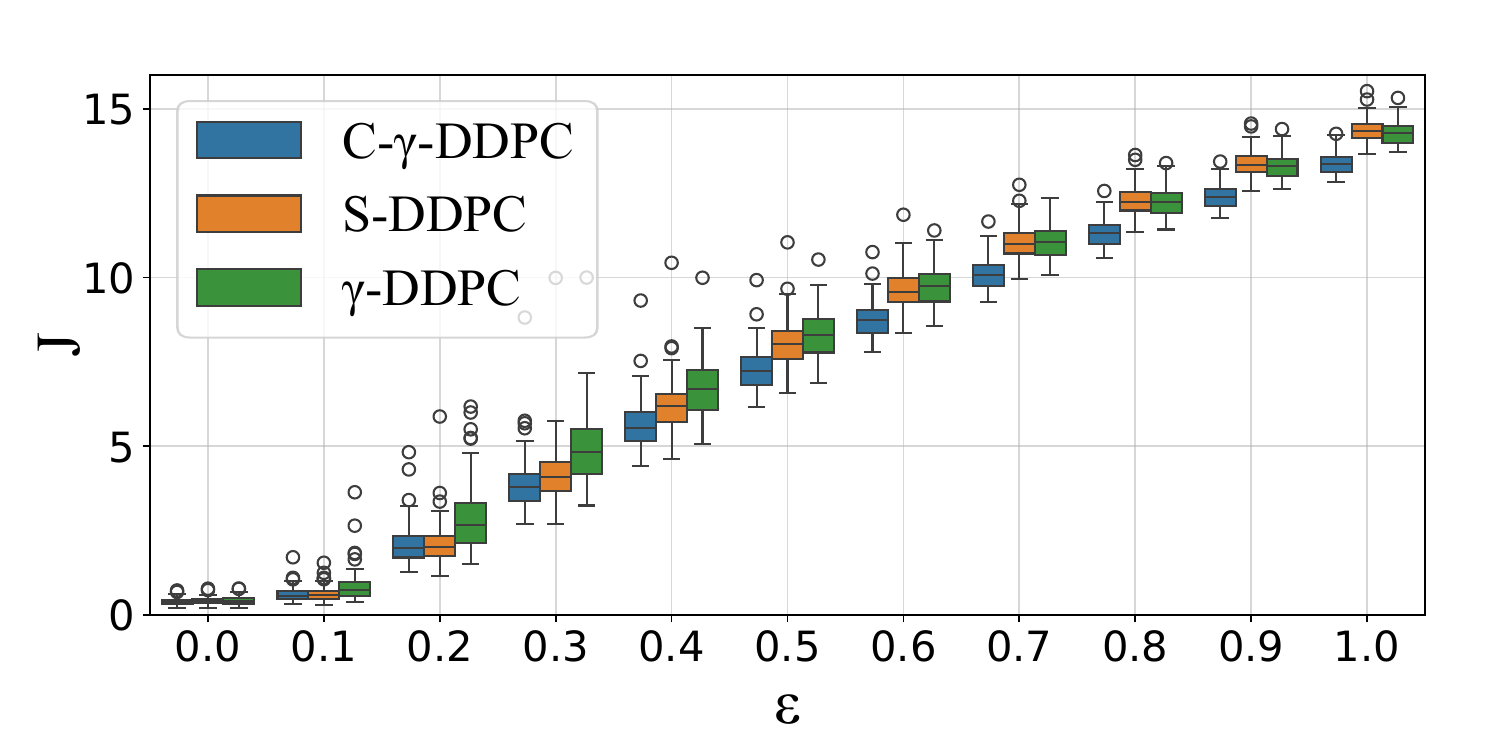}}
     \label{fig:b-nonlinear-open-C-SPC, S-SPC, and SPC}
             \caption{Control performance of different approaches of nonlinear system with open-loop data collection ($N_d=200$). 
             }
    \label{fig:nonlinear-open-C-SPC, S-SPC, and SPC}
\end{figure}

We run the nonlinear system around the equilibrium and collect input-output trajectories of length $N_d = 200$ from open-loop condition to implement different DDPC strategies. We first vary $\varepsilon$ between $0$ and $1$ in the noise-free case with $\sigma_e=0$ such that the system becomes deterministic. The results of \textcolor{black}{C-$\gamma$-DDPC}, \textcolor{black}{S-DDPC}, and \textcolor{black}{$\gamma$-DDPC} are depicted in Fig. \ref{fig:nonlinear-open-C-SPC, S-SPC, and SPC}(a). It can be seen that the performance of all methods degrades with $\varepsilon$ increasing, while \textcolor{black}{C-$\gamma$-DDPC} outperforms the other methods by a large margin. Interestingly, despite the absence of uncertainty, considering causality in the implicit input-output predictor still brings obvious benefits to the realized control performance. This is plausible because when a linear model class is used to characterize nonlinear dynamics in \textcolor{black}{$\gamma$-DDPC} and \textcolor{black}{S-DDPC}, the implicit non-causal predictor will inevitably enclose model mismatch components in the non-causal terms, thereby leading to degraded prediction accuracy. The results of 100 Monte Carlo runs in the noisy setting ($\sigma_e=0.05$) are further shown in Fig. \ref{fig:nonlinear-open-C-SPC, S-SPC, and SPC}(b), where \textcolor{black}{C-$\gamma$-DDPC} consistently outperforms the other two methods. This showcases that considering causality is not only useful in reducing errors in the presence of stochastic uncertainty but also important in handling nonlinear systems. The performance metrics of three regularized versions of DDPC that are optimally tuned are also presented in Table \ref{tab:nonlinear-open-prediction performance}, where the outperformance of \textcolor{black}{RC-$\gamma$-DDPC} is evident.

\section{Predictive Control of A Simulated Industrial Heating Furnace}
 As a key equipment in petro-chemical industry, the tubular furnace has been widely used for heating crude oil to a desired temperature before feeding into downstream units. The furnace system has two inputs, i.e. flow rates of natural gas ($u_1$, \SI{}{m^3/h}) and air ($u_2$, \SI{}{m^3/h}), and has two outputs, i.e. outlet temperature of crude oil ($y_1$, \SI{}{\degreeCelsius}) and ${\rm O}_2$ content of stack gas ($y_2$, \SI{}{\%}), as depicted in Fig. \ref{fig: furnace}. \textcolor{black}{The combustion process within the furnace has nonlinearity in both its inputs and states because the flow rates of fuel gas and air as two inputs, and the ${\rm O}_2$ content in the furnace as a state variable are known to have a nonlinear impact on the combustion efficiency.} High-fidelity simulations of this furnace is enabled by the Fired Process Heater (FPH) simulator in the Honeywell UniSim Design Suite.

\begin{figure}[h]
    \centering
    \includegraphics[width=0.4\textwidth]{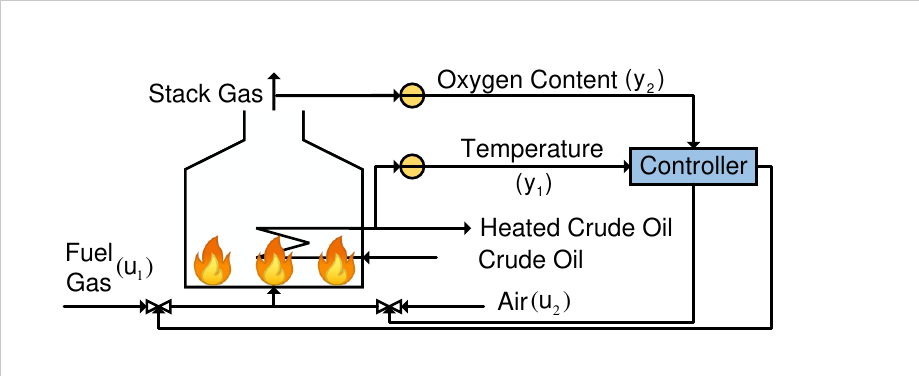}
    \caption{Structural diagram of industrial tubular furnace}
    \label{fig: furnace}
\end{figure}

\subsection{Data collection under open-loop conditions}
Considering the slow dynamics of the process, data collection and control are carried out with an interval of 3 mins. \textcolor{black}{To implement data-driven control strategies, a dataset $\{u_d(i), y_d(i)\}_{i=1}^{N_d}$ of size $N_d=3000$ is collected by varying $u_1$ between $\SI{2740}{m^3/h}$ with $\SI{3160}{m^3/h}$ and $u_2$ between $\SI{30000}{m^3/h}$ with $\SI{32000}{m^3/h}$ in open-loop conditions.} \textcolor{black}{The inputs $u_1$ and $u_2$ are set as square waves with periods of 500 and 660 time-steps, respectively.} Data collected are then normalized to account for different scales of process variables. \textcolor{black}{In online receding horizon control, the goal is to make $y_1$ track a square wave varying between 360\SI{}{\degreeCelsius} and 361\SI{}{\degreeCelsius}}, while maintaining $y_2$ around $\SI{3}{\%}$ for high combustion efficiency. We choose $L_p=50$, $L_f=100$, $\mathcal Q={\rm diag}(1_{L_f}\otimes[10^{-2},5\times10^{-3}])$, and $\mathcal R={\rm diag}(1_{L_f}\otimes[10^{-4},10^{-4}])$. The constraint set $\mathbb{U}$ for inputs is designed such that $u_1\in[\SI{2100}{m^3/h},\SI{3200}{m^3/h}]$ and $u_2\in[\SI{24000}{m^3/h},\SI{34000}{m^3/h}]$.


The process outputs under different data-driven control strategies, including \textcolor{black}{$\gamma$-DDPC}, \textcolor{black}{S-DDPC}, \textcolor{black}{C-$\gamma$-DDPC} and \textcolor{black}{RC-$\gamma$-DDPC}, are shown in Fig. \ref{fig:Hysys Output}, and empirical performance metrics such as average tracking errors and control costs are detailed in Table \ref{tab:Hysys tracking performance}, where $\tilde y_1(t)=y_1(t)-r_1(t)$ and $\tilde y_2(t)=y_2(t)-r_2(t)$ denote output tracking errors. It can be observed that \textcolor{black}{$\gamma$-DDPC} yields oscillatory control behavior that is unsatisfactory as compared to \textcolor{black}{S-DDPC} and \textcolor{black}{C-$\gamma$-DDPC}, indicating that considering causality does improve the control performance. Meanwhile, the tracking performance of \textcolor{black}{C-$\gamma$-DDPC} is superior to that of \textcolor{black}{S-DDPC} due to a strictly causal predictor identified from data. The results in Table \ref{tab:Hysys tracking performance} further indicates that \textcolor{black}{C-$\gamma$-DDPC} and \textcolor{black}{RC-$\gamma$-DDPC} yield a desirable performance by accounting for causality. Besides, \textcolor{black}{RC-$\gamma$-DDPC} performs better than the \textcolor{black}{C-$\gamma$-DDPC}, which showcases the benefit of regularization in improving the control performance of this nonlinear process. 

\begin{figure}[htb!]
    \centering
    \subfigure[Outlet temperature of crude oil]{
        \includegraphics[width=0.42\textwidth]{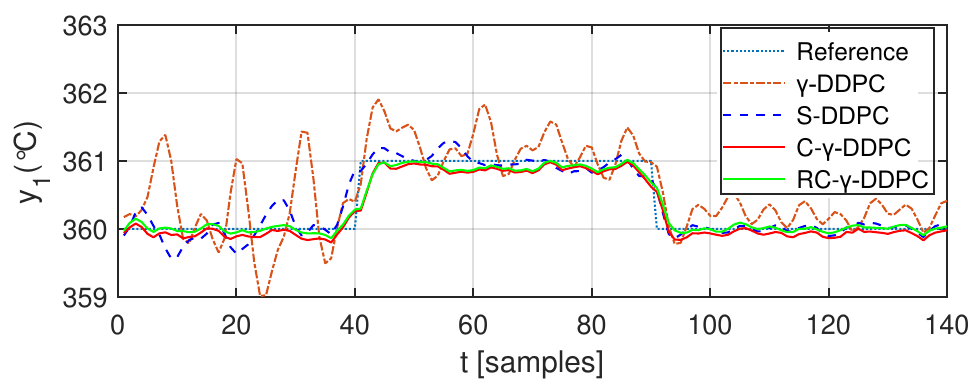}
    }
    \subfigure[${\rm O_2}$ content of stack gas]{
        \includegraphics[width=0.42\textwidth]{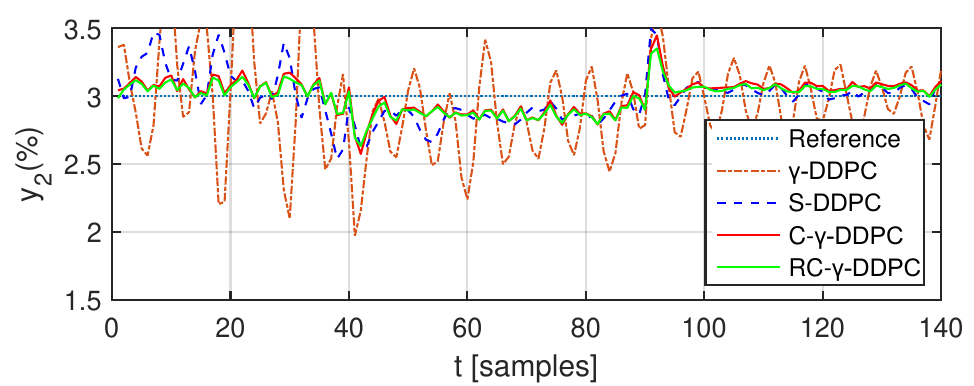}
    }
    \caption{Outputs of the tubular furnace system controlled by different controllers using open-loop data, including \textcolor{black}{$\gamma$-DDPC}, \textcolor{black}{S-DDPC}, the proposed \textcolor{black}{C-$\gamma$-DDPC} and \textcolor{black}{RC-$\gamma$-DDPC} ($\lambda=5\times10^{-3}$, $\mu=0.1$).}
    \label{fig:Hysys Output}
\end{figure}

\begin{table}[htb!]
    \centering
    \caption{Control Cost of Heating Furnace with Open-Loop Data}
    \label{tab:Hysys tracking performance}
    \tabcolsep=3pt
    \begin{tabular}{c c c c c}
  \toprule
   &$\mathbb{E} \{ \|\tilde{y}_1(t)\|_2^2 \}$&$\mathbb{E} \{  \|\tilde{y}_2(t)\|_2^2 \}$&$\mathbb{E} \{\|u_1(t)\|_2^2 \}$ & $\mathbb{E} \{ \|u_2(t)\|_2^2 \}$\\
   \midrule
    \textcolor{black}{$\gamma$-DDPC}&0.2076&0.6081&11.685&4.031\\
    \textcolor{black}{S-DDPC}&0.0318&0.1271&12.163&4.193\\
    \textcolor{black}{C-$\gamma$-DDPC}&0.0199&0.0611&12.615&4.449\\
    \textcolor{black}{RC-$\gamma$-DDPC}&0.0148&0.0517&12.293&4.329\\
  \bottomrule
    \end{tabular}
\end{table}

\subsection{Data collection under closed-loop conditions}
In industrial practice, one may not be allowed to operate the process in open-loop. A practically relevant scenario is that for safety or efficiency concerns, the process has to be operated in closed-loop conditions. 
We close the loop by a coarsely tuned stabilizing PI controller, \textcolor{black}{which has a state-space parameterization $(A_c,B_c,C_c,D_c)$}: 
       $ A_c=C_c=\begin{bmatrix}
           1 & 0 \\ 1 & 0
        \end{bmatrix},~
        B_c=\begin{bmatrix}
            0.08 & 0 \\ 0.29 & 0
        \end{bmatrix},~
        D_c=\begin{bmatrix}
           0.32 & 0 \\ 0.62 & 0
        \end{bmatrix}$. Under closed-loop feedback control, an input-output trajectory of length $N_d=2000$ is generated by varying the setpoint of $y_1$ between $\SI{360}{\degreeCelsius}$ and $\SI{366}{\degreeCelsius}$ and that of $y_2$ between $\SI{1.5}{\%}$ and $\SI{2.5}{\%}$. \textcolor{black}{This helps to collect input data that are persistently exciting; however, in stochastic systems, the correlation between inputs and past innovations may be an issue for DDPC \cite{pan2022stochastic}.} In online predictive control, the reference of $y_1$ is a square wave \textcolor{black}{ with a period of $50$ time-steps and amplitude of $1$} varying between $\SI{363}{\degreeCelsius}$ and $\SI{364}{\degreeCelsius}$, while a constant reference of $3\%$ is set for  $y_2$. For control design, we set $\mathcal Q={\rm diag}(1_{L_f}\otimes[10^{-3},5\times10^{-3}])$ and $\mathcal R={\rm diag}(1_{L_f}\otimes[10^{-4},10^{-4}])$, while the other parameters are the same as those in the previous open-loop case.

\begin{figure}[htbp]
    \centering
    \subfigure[Outlet temperature of crude oil]{
        \includegraphics[width=0.42\textwidth]{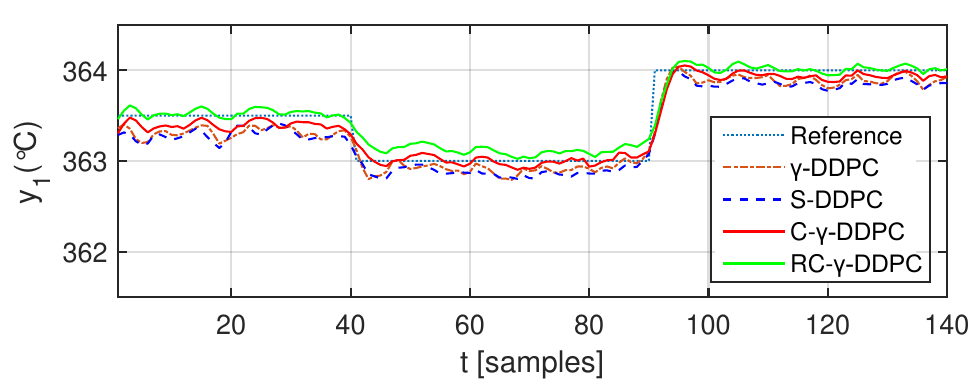}
    }
    \subfigure[${\rm O_2}$ content of stack gas]{
        \includegraphics[width=0.42\textwidth]{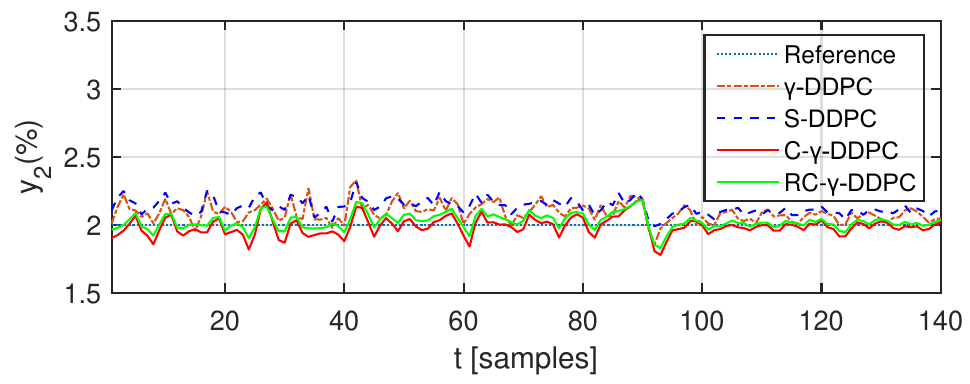}
    }
    \caption{Outputs of the tubular furnace system controlled by different controllers using closed-loop data, including \textcolor{black}{$\gamma$-DDPC}, \textcolor{black}{S-DDPC}, the proposed \textcolor{black}{C-$\gamma$-DDPC} and \textcolor{black}{RC-$\gamma$-DDPC}.}
    \label{fig:Hysys Output closed-loop}
\end{figure}

We implement \textcolor{black}{$\gamma$-DDPC}, \textcolor{black}{S-DDPC}, \textcolor{black}{C-$\gamma$-DDPC}, and \textcolor{black}{RC-$\gamma$-DDPC} in a receding horizon fashion, and investigate their realized performance. In \textcolor{black}{RC-$\gamma$-DDPC}, the regularization parameters are chosen as $\lambda=5\times10^{-3}$ and $\mu=0.1$. The controlled outputs are depicted in Fig. \ref{fig:Hysys Output closed-loop} and empirical performance metrics can be found in Table \ref{tab:Hysys tracking performance closed-loop}. It can be seen that \textcolor{black}{S-DDPC} exhibits an even less favorable tracking than \textcolor{black}{$\gamma$-DDPC}, whereas \textcolor{black}{C-$\gamma$-DDPC} exhibits much lower control errors than \textcolor{black}{$\gamma$-DDPC} due to the enforced causality. In addition, the regularized formulation \textcolor{black}{RC-$\gamma$-DDPC}, as a relaxation of \textcolor{black}{C-$\gamma$-DDPC}, gains an upper hand in terms of tracking errors and control cost. This showcases the usefulness of enforcing a causal predictive relation in DDPC, based on which suitable regularizaion may bring further benefit.
\begin{table}[htbp]
    \centering
    \caption{Control Cost of Heating Furnace with Closed-Loop Data}
    \label{tab:Hysys tracking performance closed-loop}
    \tabcolsep=3pt
    \begin{tabular}{c c c  c c}
  \toprule
    &$\mathbb{E} \{ \|\tilde{y}_1(t)\|_2^2 \}$&$\mathbb{E} \{ \|\tilde{y}_2(t)\|_2^2 \}$&$\mathbb{E} \{\|u_1(t)\|_2^2 \}$ & $\mathbb{E} \{ \|u_2(t)\|_2^2 \}$\\
   \midrule
   \textcolor{black}{$\gamma$-DDPC}&0.0235&0.0527&1.6463&1.4371\\
   \textcolor{black}{S-DDPC}&0.0325&0.0819&1.6930&1.3320\\
   \textcolor{black}{C-$\gamma$-DDPC}&0.0123&0.0183&1.5110&1.7562\\
      \textcolor{black}{RC-$\gamma$-DDPC}&0.0123&0.0148&1.3039&1.3625\\
  \bottomrule
    \end{tabular}
\end{table}

\section{Conclusion}

In this \textcolor{black}{brief}, a novel causality-informed formulation of DDPC was derived, which brings improved performance over the existing DDPC methodology that falls short of considering causality. \textcolor{black}{Using LQ factorization, a simple formulation of causal $\gamma$-DDPC was developed that only entails minor modification of generic non-causal $\gamma$-DDPC.} 
\textcolor{black}{A regularized form was further derived that can help improving the control performance.} As useful extensions of the generic $\gamma$-DDPC, the proposed causal formulations are as easy to solve as $\gamma$-DDPC and its regularized version.
Numerical examples and applications to a simulated furnace demonstrated the improved performance of the causality-informed DDPC, in the presence of not only stochastic noise but also process nonlinearity. 

\bibliographystyle{IEEEtran}
\bibliography{ref_bib}

\end{document}